\theoremstyle{plain} 
	\newtheorem{thm}{Theorem}[section] 
	\newtheorem{prop}[thm]{Proposition} 
	\newtheorem{lemma}[thm]{Lemma}  
	\newtheorem{cor}[thm]{Corollary} 
\theoremstyle{definition}
	\newtheorem{examples}[thm]{Example}
\newcommand{\cate}{\mathcal E}
\newcommand{\aro}{\longrightarrow}
\newcommand{\ep}{\varepsilon}
\newcommand{\id}{\text{id}}
\newcommand{\N}{\mathbb N}
\newcommand{\R}{\mathbb R}
\newcommand{\h}{\mathbb H}
\newcommand{\catl}{\mathcal L}
\newcommand{\f}{ \varphi}
\newcommand{\F}{\Phi}
\newcommand{\Lip}{\text{\rm{Lip}}}
\newcommand{\ev}{\text{\rm{ev}}}
\newcommand{\diam}{\text{\rm{diam}}}
\newcommand{\card}{\text{\rm{card}}}
\newcommand{\map}{\text{\rm{map}}}
\begin{document} 

\title{Bi-Lipschitz embeddings revisited} 
 
\author[H. Movahedi-Lankarani]
	{H. Movahedi-Lankarani}
\address{Department of Mathematics\\
	Penn State Altoona\\
         Altoona, PA 16601-3760} 
\email{hxm9@psu.edu}
 
\author [R. Wells]
        {R. Wells}
\address{Department of Mathematics\\
         Penn State University\\
         University Park, PA 16802}

\keywords{bi-Lipschitz embedding; bilipschitz; Lipschitz; canonical map;  distance;  weakly spherically compact; uniform point separation}
\subjclass{Primary: 54E40, 58C20; Secondary: 54C25, 54F45, 54F50, 58C25, 57R35, 57R40, 26B05}

\date{\today}

\maketitle 

\abstract
     Given a metric space $(X, d)$, we continue our study of the distance function $x \mapsto d (x, -)$ and its relation to  bi-Lipschitz embeddings of $(X, d)$ into  $\R^N$.  As application, given a compact metric-measure space  $(X, d, \mu)$, we give three sufficient sonditions for the existence of such a bi-Lipschitz embedding.    
 \endabstract 



\section{Introduction}

A map $f: (X, d_X) \aro (Y, d_Y)$ of metric spaces is called Lipschitz if there exists $L > 0$ such that $d_Y \bigl(f (x), f (x^{\prime})\bigr) \le L\, d_X (x, x^{\prime})$  for all $x, x^{\prime} \in X$.  The infimum of such $L$ is called the Lipschitz constant of $f$ and is denoted $\Lip (f)$.  The map $f$ is called bi-Lipschitz provided that $f$ is invertible with $f^{- 1}$ also Lipschitz.   Equivalently, $f$ is bi-Lipschitz provided that there exist $0 < \ell \le L$ such that  
\[
\ell \, d_X (x, x^{\prime})  \le d_Y \bigl(f (x), f (x^{\prime})\bigr) \le L \, d_X (x, x^{\prime}),
\]

\noindent  for all $x, x^{\prime} \in X$.   In this case, the supremum of such $\ell$ is called the lower Lipschitz constant of $f$.  We are interested in the still open problem of determining when a metric space admits a bi-Lipschitz embedding into a finite dimensional Euclidean  space.

\vskip5pt

Our interest in bi-Lipschitz embeddings arises from the fact that isometric embeddings of metric spaces can be  very  rigid.  For instance, using a very simple argument, it is shown in \cite[Proposition 3.1]{LU-ML} that  no     infinite ultrametric   space admits an isometric embedding into a finite dimensional Euclidean space.  (We recall that an ultrametric $d$ on a set $X$ is a metric that satisfies the following strong triangle inequality:  $d (x, z)  \le \max \{d (x, y) , \, d (y, z)\}$, for all $x, y, z \in X$.)   Indeed, it is shown in \cite{aschbacher} that if an ultrametric space admits an isometric embedding into $\R^N$,  then it has at most $N + 1$ points.  In contrast, every doubling   ultrametric space   admits a bi-Lipschitz embedding into a  finite dimensional Euclidean space; see  \cite{assouad2,  LU-ML} and references therein.  Even better (or worse), there is a finite metric space which does not admint an isometric embedding into $\R^N$, any $N \ge 1$; see Example \ref{noiso}.  Obviously, there is a bi-Lipschitz embedding of any finite metric space into $\R^1$.

\vskip5pt

Now let $(X, d, \mu)$ be a metric-measure space.  That is, $(X, d)$ is a  metric space and $\mu$ is a   Borel regular measure on  $X$ which is non-trivial on nonempty open sets.  We assume, throughout,  that  $0 < \mu (X) < \infty$.   We  consider the canonical map $\iota_d: (X, d) \aro L^p (\mu)$, $1 \le p < \infty$, given by setting  $\iota_d: x \mapsto d (x, -)$.  This canonical map  is a one-one contraction, but not necessarily bi-Lipschitz.   Our interest lies in the question of when this canonical map is lower Lipschitz and its relationship to bi-Lipschitz embeddings  into finite dimensions \cite{bilip}.    For simplicity, we restrict to the case of the canonical map $\iota_d: (X, d) \aro L^2 (\mu)$.  

\vskip5pt

 In sections 2 and 3, we   first study the set  $\iota_d (X) \subset L^2 (\mu)$ and some closely related maps.  We then prove  three  bi-Lipschitz embedding results: Theorem \ref{iotarho}, Theorem \ref{iotaneariso}, and Theorem  \ref{d*d}.  We  finish by  remarking on some relevant issues.   This paper is a continuation of the work in \cite{bilip} so that, as an appendix, an erratum for \cite{bilip} is also included.


\section{The  canonical map  and radial projection}

Let $(X, d, \mu)$ be a metric-measure space and let $\iota_d: X \aro L^2 (\mu)$ be the canonical map so that, for every $x \in X$, we have $0 < \Vert\iota_{\kappa} (x)\Vert_2 \le \diam (X) \, \mu^{1/2} (X)$.  It is clear that for $x, y \in X$ the inner product
$\langle\iota_{d} (x), \iota_{d} (y)\rangle    \ge 0$.   If for some $x, y \in X$ we have $\langle\iota_{d} (x), \iota_{d} (y)\rangle  =  0$, then there exist subsets $A, B \subset X$ such that $d (x, z) = 0$ for all $z \in A$,  $d (y, z) = 0$ for all $z \in B$, and $\mu (A \cup B) = \mu (X) > 0$.   But $d$ is a metric so that $A = \{x\}$ and $B = \{y\}$.  Hence, since $\mu$ is regular Borel,  if $\card (X) \ge 3$,  then $\langle\iota_{d} (x), \iota_{d} (y)\rangle  >  0$ for all $x, y \in X$.  Also, the function $(x, y) \mapsto \langle\iota_{d} (x), \iota_{d} (y)\rangle$ is continuous and strictly positive.  Consequently, if $(X, d)$ is compact, then this function attains its minimum $s > 0$, and we  have the following lemma.

\begin{lemma}
\label{dotpositive} 
Let $(X, d, \mu)$ be a compact metric-measure space  with $\card (X) \ge 3$ and let $\iota_{d}: X \aro L^2 (\mu)$ be the canonical map.  Then there exists $s > 0$ such that for all $x, y \in X$ we have $\langle\iota_{d} (x), \iota_{d} (y)\rangle  \ge s >  0$.
\end{lemma}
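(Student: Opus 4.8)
The strategy is to write the inner product as an integral, $\langle\iota_d(x),\iota_d(y)\rangle=\int_X d(x,z)\,d(y,z)\,d\mu(z)$, and then to obtain $s$ by a compactness argument: I will show that the function $F(x,y):=\langle\iota_d(x),\iota_d(y)\rangle$ is continuous and strictly positive on the compact space $X\times X$, so it attains a minimum, which is the desired $s>0$.

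Non-negativity of $F$ is immediate, since the integrand is non-negative. The heart of the matter is the \emph{strict} positivity. If $F(x,y)=0$ for some pair $(x,y)$, then $d(x,z)\,d(y,z)=0$ for $\mu$-almost every $z$, so the set on which the integrand is positive is $\mu$-null. Because $d$ is a genuine metric, $d(x,z)=0$ only for $z=x$, so that set is precisely $X\setminus\{x,y\}$. Since $\card(X)\ge 3$ and singletons are closed in a metric space, $X\setminus\{x,y\}$ is a nonempty open set, hence $\mu\bigl(X\setminus\{x,y\}\bigr)>0$ by the standing hypothesis that $\mu$ is non-trivial on nonempty open sets --- a contradiction. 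Therefore $F(x,y)>0$ for all $x,y\in X$.

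For continuity I would observe that $\iota_d$ is a scaled contraction, hence Lipschitz: $\Vert\iota_d(x)-\iota_d(x')\Vert_2\le\mu^{1/2}(X)\,d(x,x')$, using $|d(x,z)-d(x',z)|\le d(x,x')$; moreover $\iota_d(X)$ is bounded in $L^2(\mu)$ by the estimate recalled just before the lemma. Since the inner product is continuous on $L^2(\mu)\times L^2(\mu)$ (by Cauchy--Schwarz), the composite $F=\langle\iota_d(-),\iota_d(-)\rangle$ is continuous on $X\times X$. One could equally well argue directly by dominated convergence, the constant $\diam(X)^2$ being an integrable dominating function. Now $X\times X$ is compact, so $F$ attains its infimum at some $(x_0,y_0)$; putting $s=F(x_0,y_0)$ gives $\langle\iota_d(x),\iota_d(y)\rangle\ge s>0$ for all $x,y\in X$.

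There is no deep obstacle here; the only point that must be handled with care is the identification of the zero set of the integrand with the two-point set $\{x,y\}$ --- which uses that $d$ is a metric and not merely a pseudometric --- together with the observation that such a set cannot carry all of the mass of $\mu$ once $\card(X)\ge 3$, which is exactly where the measure-theoretic hypothesis on $\mu$ enters.
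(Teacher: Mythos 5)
Your proof is correct and follows essentially the same route as the paper: nonnegativity of the integrand, strict positivity because the zero set of $z\mapsto d(x,z)\,d(y,z)$ is exactly $\{x,y\}$ (so its complement is a nonempty open set of positive measure when $\card(X)\ge 3$), and then continuity plus compactness of $X\times X$ to extract the minimum $s>0$. If anything, your write-up is slightly more explicit than the paper's about where the hypothesis that $\mu$ is non-trivial on nonempty open sets enters.
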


 \noindent In this case, of course,  we have 
\[
0 < s \le \langle\iota_{d} (x), \iota_{d} (x)\rangle = \Vert\iota_{d} (x)\Vert_2^2 \le \diam (X) \Vert\iota_{d^{1/2}} (x)\Vert_2^2  \le \diam^2 (X) \mu (X).
\]

Let $S (\mu)$ denote the unit sphere of $L^2 (\mu)$ and let $\varpi: L^2 (\mu) \setminus \{0\} \aro S (\mu)$ be the radial projection $u \mapsto u/\Vert u\Vert_2$.  Being the gradient of the convex function $u \mapsto  \Vert u\Vert_2$, the map $\varpi$ satisfies $\langle\varpi (u) - \varpi (v), u - v\rangle$ for all $u, v \in L^2 (\mu)$.

\begin{lemma}
\label{radproj} 
Let $(X, d, \mu)$ be a  metric-measure space and let $\iota_{d}: X \aro L^2 (\mu)$ be the canonical map.  Then the radial projection $\varpi\big\vert_{\iota_{d} (X)}$ is injective.
\end{lemma}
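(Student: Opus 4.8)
The plan is to simply unwind the definition of the radial projection. Suppose $x, y \in X$ satisfy $\varpi\bigl(\iota_{d} (x)\bigr) = \varpi\bigl(\iota_{d} (y)\bigr)$. Since $\varpi (u) = u/\Vert u\Vert_2$, this says exactly that $\iota_{d} (x)$ and $\iota_{d} (y)$ are positive scalar multiples of one another in $L^2 (\mu)$; explicitly, $\iota_{d} (x) = \lambda\, \iota_{d} (y)$ with $\lambda = \Vert\iota_{d} (x)\Vert_2/\Vert\iota_{d} (y)\Vert_2 \in (0, \infty)$, the norms being finite and nonzero because $X$ has more than one point. (Note that the convexity property of $\varpi$ recorded just before the lemma is not needed here; only the formula $\varpi(u) = u/\Vert u\Vert_2$ is used.)

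Next I would transport this identity from $L^2 (\mu)$ back to a pointwise statement on $X$. By definition $\iota_{d} (x) = d (x, -)$ and $\iota_{d} (y) = d (y, -)$, so $d (x, z) = \lambda\, d (y, z)$ for $\mu$-almost every $z \in X$. Both sides are continuous functions of $z$ (each is $1$-Lipschitz), and since $\mu$ is non-trivial on nonempty open sets we have $\operatorname{supp} (\mu) = X$; hence the set $\{z \in X : d (x, z) \ne \lambda\, d (y, z)\}$ is open and $\mu$-null, therefore empty. Thus $d (x, z) = \lambda\, d (y, z)$ for \emph{every} $z \in X$.

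Finally, evaluate at $z = x$: we get $0 = d (x, x) = \lambda\, d (y, x)$, and since $\lambda > 0$ this forces $d (x, y) = 0$, i.e. $x = y$, which gives injectivity of $\varpi\big\vert_{\iota_{d} (X)}$. The only step with any content — the ``main obstacle,'' such as it is — is the passage from $\mu$-a.e. equality to genuine pointwise equality of the two distance functions, which is precisely where the full-support hypothesis on $\mu$ enters, in tandem with continuity; once that is in hand, the evaluation-at-$x$ trick finishes the argument at once.
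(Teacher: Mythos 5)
Your proof is correct and follows essentially the same route as the paper's: reduce to $\iota_d(x) = \lambda\,\iota_d(y)$ with $\lambda > 0$, upgrade the $\mu$-a.e.\ identity $d(x,z) = \lambda\, d(y,z)$ to a pointwise one via continuity and the full support of $\mu$, and evaluate at $z = x$. You are merely more explicit than the paper about why a.e.\ equality of continuous functions implies everywhere equality, which is a welcome clarification rather than a deviation.
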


\begin{proof}
If for some $x, y \in X$ we have $\iota_d (x) = r \iota_d (y)$ for some $r > 0$, then $d (x, z) = r \, d (y, z)$ for $\mu$-almost every $z \in X$.  Being a metric, $d$ is continuous and hence $d (x, z) = r \, d (y, z)$ for  every $z \in X$.  In particular, for $z =x$ we have $0 = d (x, x) = r \, d (y,  x)$, and $x = y$.  
\end{proof}

   The map $\iota_d$ induces a metric $\rho_d$ on $X$ by setting   $\rho_d (x, y)  = \Vert\iota_d (x) - \iota_d (y)\Vert_2$.     Also,   it follows from Lemma \ref{radproj} that $\iota_{d}$ induces  another metric   $\theta_{d}$ on $X$ by setting $\theta_{d} (x, y) =$ the arc-length on the unit sphere of $L^2 (\mu)$ from $\varpi \bigl(\iota_d (x)\bigl) = \iota_{d} (x) / \Vert\iota_{d} (x)\Vert_2$ to $\varpi \bigl(\iota_d (y)\bigl) = \iota_{d} (y) / \Vert\iota_{d} (y)\Vert_2$.   If $(X, d)$ is compact, then  
\[
0 \le \theta_{d} (x, y) \le \cos^{- 1} \left(\frac{s}{\Vert\iota_{d} (x)\Vert_2   \Vert\iota_{d} (y)\Vert_2}\right) < \pi/2.
\]

\begin{cor}
\label{maximal} 
Let $(X, d, \mu)$ be a compact metric-measure space  with $\card (X) \ge 3$ and let $\iota_{d}: X \aro L^2 (\mu)$ be the canonical map.  Then  there exist $x_{\circ}, y_{\circ} \in X$ such that $0 < \cos \theta_{d} (x_{\circ}, y_{\circ}) \le \cos\theta_{d} (x, y)$ for all $x, y \in X$.
\end{cor}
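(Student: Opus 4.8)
The plan is to realize $\cos\theta_d$ as a continuous, strictly positive function on the compact space $X\times X$ and then quote compactness to get the minimum.

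First I would rewrite $\cos\theta_d$ in terms of inner products in $L^2(\mu)$. Since $\varpi\bigl(\iota_d(x)\bigr)$ and $\varpi\bigl(\iota_d(y)\bigr)$ are unit vectors, the arc-length along the great circle they span, namely $\theta_d(x,y)$, equals the angle $\cos^{-1}\bigl\langle\varpi(\iota_d(x)),\varpi(\iota_d(y))\bigr\rangle\in[0,\pi]$, and as already observed in the excerpt this angle in fact lies in $[0,\pi/2)$ because $\card(X)\ge 3$ and Lemma \ref{dotpositive} apply. Consequently
\[
\cos\theta_d(x,y)=\bigl\langle\varpi(\iota_d(x)),\varpi(\iota_d(y))\bigr\rangle=\frac{\langle\iota_d(x),\iota_d(y)\rangle}{\Vert\iota_d(x)\Vert_2\,\Vert\iota_d(y)\Vert_2}>0
\]
for all $x,y\in X$; more quantitatively, $\langle\iota_d(x),\iota_d(y)\rangle\ge s$ by Lemma \ref{dotpositive} and $\Vert\iota_d(x)\Vert_2\le\diam(X)\,\mu^{1/2}(X)$, so $\cos\theta_d(x,y)\ge s/\bigl(\diam^2(X)\,\mu(X)\bigr)>0$.

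Next I would check continuity of $(x,y)\mapsto\cos\theta_d(x,y)$ on $X\times X$. The canonical map is continuous (indeed Lipschitz): since $|d(x,z)-d(x',z)|\le d(x,x')$ for every $z\in X$, we get $\Vert\iota_d(x)-\iota_d(x')\Vert_2\le d(x,x')\,\mu^{1/2}(X)$. Hence both $(x,y)\mapsto\langle\iota_d(x),\iota_d(y)\rangle$ and $(x,y)\mapsto\Vert\iota_d(x)\Vert_2\Vert\iota_d(y)\Vert_2$ are continuous, and the latter is bounded below by $s>0$ (Lemma \ref{dotpositive} with $y=x$), so it never vanishes; therefore the quotient $\cos\theta_d$ is continuous.

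Finally, $X\times X$ is compact, so the continuous function $\cos\theta_d$ attains its infimum at some pair $(x_\circ,y_\circ)\in X\times X$, and by the lower bound above $\cos\theta_d(x_\circ,y_\circ)\ge s/\bigl(\diam^2(X)\,\mu(X)\bigr)>0$; this is exactly the assertion $0<\cos\theta_d(x_\circ,y_\circ)\le\cos\theta_d(x,y)$ for all $x,y\in X$. I do not anticipate a genuine obstacle: the only two points meriting a line of justification are the identification of spherical arc-length with the $\cos^{-1}$ of the inner product of unit vectors, and the continuity of $\iota_d$, both of which are routine.
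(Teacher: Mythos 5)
Your proof is correct and follows essentially the same route the paper takes: the paper derives the corollary directly from the preceding displayed bound $\theta_d(x,y)\le\cos^{-1}\bigl(s/(\Vert\iota_d(x)\Vert_2\Vert\iota_d(y)\Vert_2)\bigr)<\pi/2$ together with the continuity-and-compactness argument already used for Lemma \ref{dotpositive}. Your explicit verification of the continuity of $\iota_d$ and the identification $\cos\theta_d(x,y)=\langle\iota_d(x),\iota_d(y)\rangle/\bigl(\Vert\iota_d(x)\Vert_2\Vert\iota_d(y)\Vert_2\bigr)$ just spells out what the paper leaves implicit.
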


Of course there are  relationships among the three metrics $d$, $\rho_d$, and $\theta_{d}$.  For instance,  it is easily seen that $(2\, \sqrt s /\pi) \, \theta_{d} \le \rho_{d} \le d$.     However, $\theta_{d}$ and $\rho_{d}$ are not in general bi-Lipschitz equivalent. 
Letting 
\[
\kappa_{d} (x, y) =  \bigl\Vert \varpi \bigl(\iota_d (x)\bigl)  -  \varpi \bigl(\iota_d (y)\bigl)\bigr\Vert_2 = 2 \sin \left(\frac{\theta_{d} (x, y)}{2}\right),
\]

\noindent  it is clear that $(2/\pi) \theta_{d} \le \kappa_{d}  \le \theta_{d}$.       Interestingly,  the symmetric function 
\begin{equation}
\label{dd}
\Delta  (\kappa_{d}) (x, y) = \langle\kappa_d (x, -), \kappa_d (-, y)\rangle = \cos \theta_{d} (x, y)
\end{equation}

 \noindent does not satisfy the triangle inequality, but $1 -  \Delta (\kappa_{d})$ is  a pseudometric; see also Section 6.

\vskip5pt

  It follows from straightforward calculations that the radial projection $\varpi: L^2 (\mu) \setminus \{0\} \aro S (\mu)$ is in fact differentiable with 
\[
d \varpi (u) (v) = \frac{1}{\Vert u\Vert_2} \left[v - \frac{\langle u, v\rangle}{\Vert u\Vert_2^2} u\right].
\]

\noindent   This raises the following question:   Under what conditions is the map  $\varpi\big\vert_{\iota_{d} (X)}$  a $C^1$-embedding?   We will not address this question in this paper except to formulate the conjecture below.     We recall that  there is a notion of the tangent space at a point of an arbitrary subset of a Hilbert space introduced in \cite{MMLW}, and we have:

\vskip5pt

\noindent {\bf{Conjecture.}} {\em{ 
Let  $(X, d, \mu)$ be a compact metric-measure space and  let  $\iota_{d}: (X, d) \aro L^2 (\mu)$ be the canonical map.  Assume  that there exists  $0 < \ell \le 1$ satisfying
\begin{enumerate}
\item   $\ell \, d  (x,  y) \le \bigl\Vert \iota_{d} (x) - \iota_{d} (y)\bigr\Vert_2$ for all $x, y \in X$, and
\item   $\bigl\Vert \iota_{d^{1/2}} (x)\bigr\Vert_2^2  <  \ell \, \bigl\Vert \iota_{d} (x)\bigr\Vert_2$ for every $x \in X$.
\end{enumerate}

\noindent Then the radial projection $\varpi\big\vert_{\iota_{d} (X)}: \iota_{d} (X) \aro S (\mu)$ is a $C^1$-embedding.}}


 \section{Some related   maps}

Let $(X, d, \mu)$ be a metric-measure space.    The metric $d$ induces a linear transformation $T_{d}: L^2 (\mu) \aro L^2 (\mu)$ by setting 
\begin{equation}
\label{td}
T_{d}( f)  =  \langle\iota_{d} (-), f\rangle,
\end{equation}

\noindent for $ f \in L^2 (\mu)$.  Clearly $T_{d}$ is a Hilbert-Schmidt operator and hence compact and self-adjoint.   Moreover,  $T_d \bigl(\iota_d (x)\bigr) = \langle \iota_d (-), \iota_d (x)\rangle = \Delta (d)  (-, x) = \iota_{\Delta (d)} (x)$  so that we have the commutative diagram
 
\begin{equation}
\label{k*k}
\xymatrix{ & L^2 (\mu) \ar[d]^{T_d} \\
X \ar[ru]^{\iota_d} \ar[r]_{\iota_{\Delta (d)}} & L^2 (\mu).}
\end{equation}

We next consider a variant of the operator $T_{d}: L^2 (\mu) \aro L^2 (\mu)$.  To this end, we  write $\Lip \, (X)$ for the class of all real-valued Lipschitz functions on $(X, d)$; it is a normed linear space with the norm given by

\begin{equation}
\label{lipnorm}
\Vert f\Vert_{\Lip} = \max \bigl\{\sup_{x \in X} \bigl\vert f (x)\bigr\vert,  \Lip (f)\bigr\}.
\end{equation}

\noindent  The metric $d$ induces a linear transformation $J_{d}: L^2 (\mu) \aro \Lip \, (X)$ by setting\footnote{We note that the distinction between $T_{d}$ and $J_{d}$ is in the codomain.}
\[
\left(J_{d} f \right) (x) = \langle\iota_{d} (x), f\rangle,
\]

\noindent for $ f \in L^2 (\mu)$.  That $J_{d} f \in \Lip \, (X)$ follows from
\[
\aligned
\bigl\vert\left(J_{d} f\right) (x) - \left(J_{d} f\right) (y)\bigr\vert & \le \int\bigl\vert d (x, z) - d (y, z)\bigr\vert \bigl\vert f (z)\bigr\vert d \mu (z)  \\
& \le \mu (X)  \bigl\Vert f\bigr\Vert_2  d (x, y)   
\endaligned
\]

\noindent so that $\Lip\,  (J_d f) \le \mu (X) \bigl\Vert f \bigr\Vert_2$.

\begin{lemma}
\label{bnddlinear} 
Let   $(X, d, \mu)$ be metric-measure space with $\diam \, (X, d) < \infty$.    Then the linear map $J_{d}: L^2 (\mu) \aro \Lip \, (X)$ is  bounded, and hence, Lipschitz.
\end{lemma}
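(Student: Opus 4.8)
The plan is to estimate the two quantities that together make up the $\Lip$-norm of $J_d f$, namely $\sup_{x \in X} \bigl\vert (J_d f)(x)\bigr\vert$ and $\Lip(J_d f)$, each by a fixed multiple of $\Vert f\Vert_2$. The second of these is already in hand: the computation displayed just before the statement shows $\Lip(J_d f) \le \mu(X)\,\Vert f\Vert_2$, and this is precisely where the finiteness of $\mu(X)$ (a blanket hypothesis) is used. For the first, I would apply the Cauchy--Schwarz inequality to obtain $\bigl\vert (J_d f)(x)\bigr\vert = \bigl\vert\langle\iota_d(x), f\rangle\bigr\vert \le \Vert\iota_d(x)\Vert_2\,\Vert f\Vert_2$, and then invoke the elementary bound $\Vert\iota_d(x)\Vert_2^2 = \int d(x,z)^2\, d\mu(z) \le \diam^2(X)\,\mu(X)$ recorded at the beginning of Section 2, which gives $\sup_{x \in X}\bigl\vert (J_d f)(x)\bigr\vert \le \diam(X)\,\mu^{1/2}(X)\,\Vert f\Vert_2$. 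Here the finiteness of $\diam(X,d)$ is what is needed.

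Combining the two bounds yields $\Vert J_d f\Vert_{\Lip} \le C\,\Vert f\Vert_2$ with $C = \max\bigl\{\diam(X)\,\mu^{1/2}(X),\ \mu(X)\bigr\}$, which is exactly the assertion that the linear map $J_d$ is bounded. Finally, since a linear map between normed linear spaces is Lipschitz if and only if it is bounded, with Lipschitz constant equal to its operator norm, the conclusion follows immediately, and in fact $\Lip(J_d) \le C$.

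There is no genuine obstacle here; the only point demanding attention is that both $\diam(X,d)$ and the total mass $\mu(X)$ must be finite for the constant $C$ to be meaningful, and both hold under the stated hypotheses (the latter by the standing assumption $0 < \mu(X) < \infty$, the former by the assumption $\diam(X,d) < \infty$). Tracking the sharp operator norm would require comparing $\diam(X)\,\mu^{1/2}(X)$ with $\mu(X)$, but this refinement is irrelevant for the qualitative statement.
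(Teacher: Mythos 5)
Your proposal is correct and follows essentially the same route as the paper: bound the sup-norm of $J_d f$ by $\diam(X)\,\mu^{1/2}(X)\,\Vert f\Vert_2$ and reuse the already-displayed bound $\Lip(J_d f)\le \mu(X)\Vert f\Vert_2$, then take the maximum and invoke the equivalence of boundedness and Lipschitz continuity for linear maps. The only cosmetic difference is that you obtain the sup-norm estimate via Cauchy--Schwarz on $\langle\iota_d(x),f\rangle$ rather than by bounding $\int d(x,z)\vert f(z)\vert\,d\mu(z)$ pointwise, which yields the same constant up to the harmless $\mu(X)$ versus $\mu^{1/2}(X)$ discrepancy.
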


\begin{proof}   Let $f \in L^2 (\mu)$.  We have already seen  that $\Lip\, (J_d f) \le \mu (X) \bigl\Vert f \bigr\Vert_2$.  Moreover, 
\[ 
\bigl\vert \bigl(J_{d} f\bigr) (x)\bigr\vert \le \int d (x, z) \bigl\vert f (z)\bigr\vert d \mu (z) \le \diam \,  (X, d) \, \mu (X) \bigl\Vert f \bigr\Vert_2.
\]

\noindent  Consequently, the Lipschitz norm
\[
\bigl\Vert J_{d} f\bigr\Vert_{\Lip}  \le \max\bigl\{1, \diam \, (X, d)\bigr\} \mu (X)  \bigl\Vert f\bigr\Vert_2
\]

\noindent so that the operator norm $\Vert J_{d}\Vert \le   \max \bigl\{1, \diam \, (X, d)\bigr\} \mu (X)   <  \infty$.
\end{proof}

Since  the linear map $T_{d}$ is compact and   the inclusion $\xymatrix{C  (X) \ar@{^{(}->}[r] & L^2 (\mu)}$ is continuous,    the composite $\xymatrix{\Lip \, (X)  \ar@{^{(}->}[r] & C (X) \ar@{^{(}->}[r] & L^2 (\mu)}$ is compact, and we have the commutative diagram

\begin{equation}
\label{incl.compact}
\xymatrix{
 & \Lip \, (X) \ar@{^{(}->}[d] \\
L^2 (\mu) \ar[ru]^{J_{d}} \ar[r]^{T_{d}} & L^2 (\mu).
}
\end{equation}

\noindent It is  well known\footnote{See, for instance, \cite{bilip}.} that the evaluation map $\ev: X \aro \Lip^{\ast} (X)$ given by $\ev (x) (f) = f (x)$ is a bi-Lipschitz (in fact, isometric; see \cite{pestov}) embedding.    Letting $\eta: L^2 (\mu) \aro L^2 (X)^{\ast}$ denote the isomorphism $\eta (f) = \langle f, - \rangle$, we compute

\begin{equation}
\label{ev&eta}
\aligned
\bigl(\eta \circ \iota_{d}\bigr) (x) (f) & =  \langle \iota_{d} (x), f\rangle = \int d (x, z) f (z) d \mu (z) \\
& = \bigl(J_{d} f\bigr) (x) = \langle \ev (x), J_{d} f\rangle \\
& = \langle \bigl(J^{\ast}_{d} \circ \ev\bigr) (x), f\rangle = \bigl( J^{\ast}_{d} \circ \ev\bigr) (x) (f).
\endaligned
\end{equation}

\noindent Here, $J^{\ast}_{d}: \Lip^{\ast} (X) \aro L^2 (\mu)^{\ast}$ is the adjoint of $J_{d}$.  Since (\ref{ev&eta}) holds for every $x \in X$ and every $f \in L^2 (\mu)$, we have the commutative diagram

\begin{equation}
\label{CanEmbEv}
\xymatrix{
X \ar[r]^{\iota_{d}} \ar@{^{(}->}[d]_{\ev} & L^2 (\mu) \ar[d]^{\eta}\\
 \Lip^{\ast} (X) \ar[r]^{J_{d}^{\ast} }& L^2 (\mu)^{\ast}.
}
\end{equation}

 We  now recall the following discussion from \cite{bilip}.   Let $(X, d, \mu)$ be a compact metric-measure space. Then the canonical map $\iota_{d}: X \aro L^2 (\mu)$ lifts to $\lambda_{d}: X \aro \Lip \, (X)$ by setting $\lambda_{d} (x) = d (x, -)$.   This lift $\lambda_d$ is totally discontinuous with $\lambda_d (X)$ metrically discrete: For $x, y \in X$, we have
\[
\left\Vert \lambda_d (x) -  \lambda_d(y)\right\Vert_{\Lip} =  \max \bigl\{2 , d (x, y)\bigr\}.
\]

\begin{cor} \cite[Proposition 5.2]{bilip}
\label{unsep}
 Let $(X, d)$ be a compact  uncountable metric space.  Then the set $\lambda_d (X)$ is closed, uncountable and discrete in the norm topology of $\Lip\, (X)$.  In particular, $\Lip\, (X)$ is not separable.
\end{cor}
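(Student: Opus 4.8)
The statement to prove is Corollary~\ref{unsep}: for a compact uncountable metric space $(X,d)$, the set $\lambda_d(X) \subset \Lip\,(X)$ is closed, uncountable, and discrete in the norm topology, and consequently $\Lip\,(X)$ is not separable.

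The plan is to leverage the displayed formula preceding the statement, namely $\left\Vert \lambda_d(x) - \lambda_d(y)\right\Vert_{\Lip} = \max\{2, d(x,y)\}$, which I would first verify quickly. Writing $g = \lambda_d(x) - \lambda_d(y) = d(x,-) - d(y,-)$, the supremum norm part: $|g(z)| = |d(x,z) - d(y,z)| \le d(x,y)$ by the triangle inequality, with equality attained at $z = x$ (or $z = y$); so $\sup_z |g(z)| = d(x,y)$. For the Lipschitz constant: each of $d(x,-)$ and $d(y,-)$ is $1$-Lipschitz, so $\Lip(g) \le 2$; and taking $z$ near $x$ and $z'$ near $y$ along any path-like behavior one sees $\Lip(g) = 2$ whenever $x \ne y$ — more carefully, $g(x) - g(y) = (0 - d(y,x)) - (d(x,y) - 0) = -2d(x,y)$, so $\Lip(g) \ge |g(x)-g(y)|/d(x,y) = 2$. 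Hence $\Lip(g) = 2$ exactly, and $\Vert g\Vert_{\Lip} = \max\{2, d(x,y)\}$ as claimed.

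From this formula the topological conclusions are immediate. Since $d(x,y) \le \diam(X) < \infty$ for a compact space, we may also note $\max\{2, d(x,y)\} \le \max\{2, \diam(X)\}$, but what matters is the lower bound: for $x \ne y$, $\left\Vert \lambda_d(x) - \lambda_d(y)\right\Vert_{\Lip} \ge 2$. Thus distinct points of $\lambda_d(X)$ are at distance at least $2$ apart, so every point of $\lambda_d(X)$ is isolated in $\lambda_d(X)$ — i.e., $\lambda_d(X)$ is discrete. It is uncountable because $\lambda_d$ is injective (if $\lambda_d(x) = \lambda_d(y)$ then $d(x,y) = \Vert\lambda_d(x)-\lambda_d(y)\Vert_{\Lip}$ forces... actually more directly $0 = d(x,-) - d(y,-)$ evaluated at $x$ gives $d(y,x) = 0$) and $X$ is uncountable. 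For closedness: any Cauchy sequence in $\lambda_d(X)$ must eventually be constant (once terms are within distance $2$ of each other they coincide), so $\lambda_d(X)$ contains all its limit points in $\Lip\,(X)$; hence it is closed.

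Finally, non-separability of $\Lip\,(X)$ follows from a standard argument: a metric space containing an uncountable discrete (equivalently, uniformly $2$-separated) subset cannot be separable, since any countable dense set would have to meet each of the uncountably many disjoint open balls $B(\lambda_d(x), 1)$, $x \in X$, which are pairwise disjoint by the $2$-separation. I do not anticipate a genuine obstacle here; the only point requiring care is the exact computation of $\Lip\bigl(d(x,-) - d(y,-)\bigr) = 2$ (both the upper bound from $1$-Lipschitz-ness of each term and the matching lower bound from evaluating the difference at the pair $(x,y)$), and this is the step I would write out most carefully, everything else being formal consequences.
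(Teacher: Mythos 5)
Your proposal is correct and follows exactly the route the paper intends: everything is derived from the displayed identity $\Vert\lambda_d(x)-\lambda_d(y)\Vert_{\Lip}=\max\{2,d(x,y)\}$ (uniform $2$-separation gives discreteness and closedness, injectivity gives uncountability, and an uncountable $2$-separated set rules out separability). Your verification of the identity itself --- the upper bound $\Lip(g)\le 2$ from $1$-Lipschitzness of each term and the matching lower bound from evaluating $g=d(x,-)-d(y,-)$ at the pair $(x,y)$ --- is the right way to make the cited formula rigorous.
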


\noindent In contrast, however, it follows from simple calculation that   the lift  $\lambda_{\Delta (d)}: X \aro \Lip\, (X)$ of $\iota_{\Delta (d)}$ is Lipschitz. Then  diagram (\ref{k*k}) may be amended to give the commutative diagram

\begin{equation}
\label{liftcan}
\xymatrix{
& & L^2 (\mu) \ar[dd]^{J_{d}} \ar[dl]^{T_{d}} \\
X \ar@/^1.1pc/[urr]^{\iota_{d}} \ar[r]^{\iota_{\Delta (d)}} \ar@/_1.0pc/[drr]^{\lambda_{\Delta (d)}} & L^2 (\mu) \\
&& \Lip\, (X). \ar@{_{(}->}[ul]
}
\end{equation}

We next show that, when viewed as a  map of  certain  snowflakes of $(X, d)$,  the map $\lambda_{d}$ is actually Lipschitz.    Specifically, let  $0   <  s <  1$.    As usual, for $f \in \R^X$, we write 
$
\Lip_s (f) = \Lip \bigl(f: (X, d^s) \aro \R\bigr)
$
and 
$\Lip_s (X) = \left\{f \in \R^X \bigm\vert \Lip_s (f) < \infty\right\}
$.
Then $\Lip_s (X)$ is a normed linear space with $\Vert f\Vert_{\Lip_s} = \max \left\{\Vert f \Vert_{\infty}, \Lip_s (f)\right\}$. 

\begin{lemma}
\label{lipsnow}
Let $(X, d)$ be a metric space with  $\diam (X, d) < \infty$ and let $\lambda_{d} (x) = d (x, -)$.     Then, for $0 < s < 1$,  the map $\lambda_d : (X, d^s) \aro \Lip_{1 - s} (X)$   is Lipschitz with $\Lip (\lambda_d) \le  \max\bigl\{2,  \bigl(\diam (X, d)\bigr)^{1 - s}\bigr\}$.
\end{lemma}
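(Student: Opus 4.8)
The plan is to estimate the two ingredients of $\Vert \lambda_d (x) - \lambda_d (y)\Vert_{\Lip_{1 - s}}$ separately, namely the sup-norm and the $(1 - s)$-Lipschitz constant of the function $g_{x, y} := \lambda_d (x) - \lambda_d (y)$, i.e.\ $g_{x, y} (z) = d (x, z) - d (y, z)$, and then to compare each against $d (x, y)^s$.

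For the sup-norm, the triangle inequality gives $\bigl\vert g_{x, y} (z)\bigr\vert = \bigl\vert d (x, z) - d (y, z)\bigr\vert \le d (x, y)$ for every $z \in X$; since $d (x, y) \le \diam (X, d)$ and $0 < 1 - s < 1$, one has $d (x, y) = d (x, y)^{1 - s} d (x, y)^s \le \bigl(\diam (X, d)\bigr)^{1 - s} d (x, y)^s$, hence $\Vert g_{x, y}\Vert_{\infty} \le \bigl(\diam (X, d)\bigr)^{1 - s} d (x, y)^s$.

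For the $(1 - s)$-Lipschitz constant, the key observation is the double bound $\bigl\vert g_{x, y} (z) - g_{x, y} (w)\bigr\vert \le 2 \min \{d (z, w),\, d (x, y)\}$: the first term in the minimum comes from the fact that $z \mapsto d (x, z)$ and $z \mapsto d (y, z)$ are each $1$-Lipschitz, while the second comes from $\bigl\vert g_{x, y} (z)\bigr\vert, \bigl\vert g_{x, y} (w)\bigr\vert \le d (x, y)$ as above. Now I would split into two cases. If $d (z, w) \le d (x, y)$, then $\bigl\vert g_{x, y} (z) - g_{x, y} (w)\bigr\vert \le 2 d (z, w) = 2 d (z, w)^{1 - s} d (z, w)^s \le 2 d (z, w)^{1 - s} d (x, y)^s$. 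If $d (z, w) > d (x, y)$, then $\bigl\vert g_{x, y} (z) - g_{x, y} (w)\bigr\vert \le 2 d (x, y) = 2 d (x, y)^{1 - s} d (x, y)^s \le 2 d (z, w)^{1 - s} d (x, y)^s$, using that $t \mapsto t^{1 - s}$ is increasing. In either case $\bigl\vert g_{x, y} (z) - g_{x, y} (w)\bigr\vert \le 2\, d (x, y)^s \cdot d (z, w)^{1 - s}$, so $\Lip_{1 - s} (g_{x, y}) \le 2\, d (x, y)^s$.

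Combining the two estimates, $\Vert \lambda_d (x) - \lambda_d (y)\Vert_{\Lip_{1 - s}} = \max \bigl\{\Vert g_{x, y}\Vert_{\infty},\, \Lip_{1 - s} (g_{x, y})\bigr\} \le \max \bigl\{2,\, \bigl(\diam (X, d)\bigr)^{1 - s}\bigr\} \cdot d (x, y)^s$, which is exactly the claimed Lipschitz bound for $\lambda_d: (X, d^s) \aro \Lip_{1 - s} (X)$. There is no genuine obstacle here; the only point requiring a moment's thought is that one must feed the crude bound $\bigl\vert g_{x, y}\bigr\vert \le d (x, y)$ back into the difference estimate in order to control $\bigl\vert g_{x, y} (z) - g_{x, y} (w)\bigr\vert$ when $d (z, w)$ is large compared to $d (x, y)$ --- without that, the $1$-Lipschitz bound alone only yields a $(1 - s)$-Lipschitz constant of order $\bigl(\diam (X, d)\bigr)^s$ rather than a multiple of $d (x, y)^s$, which would fail to give Lipschitzness of $\lambda_d$.
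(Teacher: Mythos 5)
Your proof is correct and follows essentially the same route as the paper: both arguments rest on the bound $\bigl\vert g_{x,y}(z) - g_{x,y}(w)\bigr\vert \le 2\min\{d(x,y), d(z,w)\}$ followed by the estimate $\min\{d(x,y), d(z,w)\} \le d^{s}(x,y)\, d^{1-s}(z,w)$ (which you establish by a two-case analysis and the paper invokes directly), together with the trivial sup-norm bound $\Vert g_{x,y}\Vert_{\infty} \le d(x,y) \le \bigl(\diam(X,d)\bigr)^{1-s} d^{s}(x,y)$. The only cosmetic difference is that the paper divides by $d^{s}(x,y)$ at the end while you fold that factor in term by term.
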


\begin{proof}
Let $x \in X$.  Then 
\[
\Lip_{1 - s} \bigl(\lambda_d (x)\bigr) = \sup_{y \ne z} \frac{\bigl\vert \lambda_d (x) (y) - \lambda_d (x) (z)\bigr\vert}{d^{1 - s} (y, z)} \le d^s (y, z) \le \bigl(\diam (X, d)\bigr)^{s}
\]

\noindent so that, for each $x \in X$, the function $\lambda_d (x) \in \Lip_{1 - s} (X)$ with $\Lip_{1 - s} \bigl(\lambda_d (x)\bigr)  \le \bigl(\diam (X, d)\bigr)^{s}$.  Consequently, $\lambda_d : (X, d^s) \aro \Lip_{1 - s} (X)$, and we compute

\[
\aligned
\Lip (\lambda_d)  &= \sup_{x \ne y} \frac{\bigl\Vert \lambda_d (x) - \lambda_d (y)\bigr\Vert_{\Lip_{1 - s}}}{d^s (x, y)}\\
& =  \sup_{x \ne y} \frac{\max \left\{\bigl\Vert \lambda_d (x) - \lambda_d (y)\bigr\Vert_{\infty},  \Lip_{1 - s} \bigl(\lambda_d(x) - \lambda_d (y)\bigr) \right\}}{d^s (x, y)}.
\endaligned
\]

\noindent But $\bigl\Vert\lambda_d (x) - \lambda_d (y)\bigr\Vert_{\infty} \le  d (x, y)$ and 

\[
\aligned
\Lip_{1 - s} \bigl(\lambda_d(x) - \lambda_d (y)\bigr)  
&= \sup_{z \ne w}\frac{\bigl\vert\bigl(d (x, z) - d (y, z)\bigr) - \bigl(d (x, w) - d (y, w)\bigr)\bigr\vert}{d^{1 - s} (z, w)} \\
&\le \sup_{z \ne w} \frac{2 \min\bigl\{d (x, y), d (z, w)\bigr\}}{d^{1 - s} (z, w)} \le \sup_{z \ne w}\frac{2 d^s (x, y) d^{1 - s} (z, w)}{d^{1 - s} (z, w)}\\
&= 2 d^s (x, y).
\endaligned
\]

\noindent  Hence, we have

\[
\Lip (\lambda_d)  \le  \sup_{x \ne y} \frac{\max \bigl\{d (x, y), 2 d^s (x, y)\bigr\}}{d^s (x, y)} \le \max \left\{2, \bigl(\diam (X, d)\bigr)^{1 - s}\right\}.
\]
 
\end{proof}



\section{A bi-Lipschitz embedding result}

Let  $(X, d, \mu)$ be a  metric-measure space    and  let  $\iota_d: (X, d) \aro L^2 (\mu)$ be the canonical map.    In this section we prove the following result.

\begin{thm}
\label{iotarho} 
Let  $(X, d, \mu)$ be a compact metric-measure space and assume that the atoms of $\mu$ are isolated.  Let  $\iota_d: (X, d) \aro L^2 (\mu)$ be the canonical map and let $\rho_d  = \iota_d^{\ast} \Vert\cdot - \cdot\Vert_2$.  If the canonical map $\iota_{\rho_d}: (X, d) \aro L^2 (\mu)$ is lower Lipschitz,  then   there exists a bi-Lipschitz embedding  of $(X, d)$ into some $\R^N$.
\end{thm}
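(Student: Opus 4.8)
The plan is to upgrade the bi-Lipschitz embedding of $(X,d)$ into the infinite-dimensional space $L^2(\mu)$ provided by the hypothesis to an embedding into a finite-dimensional Euclidean space. First, the hypothesis forces $\rho_d$ to be bi-Lipschitz equivalent to $d$: one always has $\rho_d(x,y)=\Vert\iota_d(x)-\iota_d(y)\Vert_2\le\mu^{1/2}(X)\,d(x,y)$ and $\Vert\iota_{\rho_d}(x)-\iota_{\rho_d}(y)\Vert_2\le\mu^{1/2}(X)\,\rho_d(x,y)$, so together with $\ell\,d(x,y)\le\Vert\iota_{\rho_d}(x)-\iota_{\rho_d}(y)\Vert_2$ we obtain $\ell\,d(x,y)\le\mu^{1/2}(X)\rho_d(x,y)\le\mu(X)\,d(x,y)$. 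Hence $\iota_d$ is itself a bi-Lipschitz embedding of $(X,d)$ onto the compact set $A:=\iota_d(X)\subset L^2(\mu)$, and it suffices to exhibit a finite-dimensional subspace $V\subset L^2(\mu)$ for which the orthogonal projection onto $V$ is still lower Lipschitz on $A$; composing with $\iota_d$ then gives the desired embedding into $\R^N$, $N=\dim V$.

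I would then transport the hypothesis onto $A$. Let $\nu:=(\iota_d)_{\ast}\mu$, a finite Borel measure on $A$ whose atoms are the points $\iota_d(a)$ with $a$ an atom of $\mu$ --- hence, since the atoms of $\mu$ are isolated, isolated points of $A$. The bound $\Vert\iota_{\rho_d}(x)-\iota_{\rho_d}(y)\Vert_2^{2}\ge\ell^2d(x,y)^2$ becomes
\[
\int_A\bigl|\,\Vert p-c\Vert_2-\Vert q-c\Vert_2\,\bigr|^2\,d\nu(c)\ \ge\ c_0\,\Vert p-q\Vert_2^2\qquad(p,q\in A),
\]
with $c_0>0$ depending only on $\ell$ and $\mu(X)$. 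Two consequences follow. By averaging, every pair $x\neq y$ is separated, proportionally to $d(x,y)$, by one of the functions $\rho_d(\,\cdot\,,z)$, and these are uniformly bounded in $\Lip(X)$, so $(X,d)$ has uniform point separation. More sharply, fixing $p\in A$ and letting $q\to p$ in $A\setminus\{p\}$ along a sequence realizing a tangent direction $h$ of $A$ at $p$ (in the sense of \cite{MMLW}), one passes to the limit in the displayed inequality --- the integrand is dominated by $1$, and an atom of $\nu$ at $p$ is harmless because such a $p$ is isolated in $A$ and so carries no tangent directions --- to get $\langle S_p h,h\rangle\ge c_0\Vert h\Vert_2^2$, where $S_p:=\int_A u_c\otimes u_c\,d\nu(c)$ and $u_c:=(p-c)/\Vert p-c\Vert_2$. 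Since $S_p$ is a positive operator with $\operatorname{tr}S_p=\nu(A)=\mu(X)<\infty$, at each point of $A$ there can be at most $\mu(X)/c_0$ mutually orthogonal tangent directions: the infinitesimal structure of $A$ is uniformly finite-dimensional.

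The remaining, and principal, difficulty is to convert this uniform infinitesimal finite-dimensionality, together with the compactness of $A$, into an honest finite-dimensional bi-Lipschitz embedding --- that is, to produce the subspace $V$ above. The natural route is to show that the set of secant directions $\{(p-q)/\Vert p-q\Vert_2:p\neq q\in A\}$ is precompact in $L^2(\mu)$: away from the diagonal this is immediate from compactness of $A$, while near the diagonal it should follow from the tangent estimate $\langle S_p\,\cdot\,,\cdot\,\rangle\ge c_0$ together with a uniform control of the approximation of secant directions by tangent ones --- and this last point is the delicate part. Once precompactness is known, one takes $V$ to be the span of a finite $\varepsilon$-net of these directions and the projection onto $V$ is lower Lipschitz on $A$ with constant close to $1$. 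Carrying this out rigorously is where one invokes the embedding machinery of \cite{bilip} for compact (weakly spherically compact) metric spaces, and I expect it to be the main obstacle. It is also here that the isolatedness of the atoms of $\mu$ is indispensable: without it $\nu$ could charge a non-isolated point $p$ of $A$, the limiting argument above would only yield $\int_{A\setminus\{p\}}\langle u_c,h\rangle^2\,d\nu(c)\ge c_0\Vert h\Vert_2^2-\nu(\{p\})$, which is vacuous once $\nu(\{p\})\ge c_0$, and $A$ could then carry an infinite-dimensional tangent cone at $p$, precluding any bi-Lipschitz embedding into a Euclidean space.
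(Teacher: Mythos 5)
Your reduction is the right one and coincides with the paper's opening moves: from the lower Lipschitz bound for $\iota_{\rho_d}$ you correctly deduce that $\iota_d$ is bi-Lipschitz onto $A=\iota_d(X)$, push $\mu$ forward to $\nu=(\iota_d)_{\ast}\mu$ (whose atoms are isolated because those of $\mu$ are), and observe that your displayed integral inequality says exactly that the canonical map of the compact subset $A\subset L^2(\mu)$ with respect to $\nu$ is bi-Lipschitz. At precisely this point the paper finishes by invoking two quoted results from \cite{bilip}: Theorem \ref{hilwsc}(2) (a bi-Lipschitz canonical map on a compact subset of a Hilbert space, for a measure with isolated atoms, forces the subset to be weakly spherically compact) and Lemma \ref{wscfin} (for a compact subset of a normed space, weak spherical compactness is equivalent to the existence of a bounded linear map into some $\R^N$ that restricts to a bi-Lipschitz embedding). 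Composing that map with $\iota_d$ gives the theorem.

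Your proposal does not close this step, and you say so yourself (``the delicate part'', ``the main obstacle''). Moreover, the intermediate property you aim for --- norm-precompactness of the secant set $U(A)$, i.e.\ spherical compactness --- is strictly stronger than what the hypotheses yield and than what is needed. The paper is explicit that spherical compactness is not a bi-Lipschitz invariant and that weak spherical compactness does not imply it; the corrected Theorem \ref{hilwsc} only delivers weak spherical compactness from a bi-Lipschitz canonical map (the converse direction, part (1), is the one that needs the stronger hypothesis). What Lemma \ref{wscfin} actually requires is only that $0$ not lie in the \emph{weak} closure of $U(A)$, and the finite-dimensional map it produces is a general bounded linear map, not an orthogonal projection onto the span of an $\varepsilon$-net of secant directions. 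Your tangent-operator estimate $\langle S_p h,h\rangle\ge c_0\Vert h\Vert_2^2$ is a pleasant observation about uniform infinitesimal finite-dimensionality, but no argument is supplied that converts it into control of secant directions near the diagonal; that conversion is exactly the content of the cited Theorem \ref{hilwsc}(2), so the proof as written has a genuine gap at the point where the real work lies.
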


\noindent  It follows that if $\iota_d$ is an isometry, then there exists a bi-Lipschitz embedding  of $(X, d)$ into some $\R^N$.   However,   requiring the canonical map $\iota_d: (X, d) \aro L^2 (\mu)$  to be an isometry  is  very restrictive.  Indeed, if   $\iota_d$ is an isometry, then every triple of points in $X$ lie on a  geodesic.  Consequently,  $(X, d)$ is a  compact subset of the circle with the arc-length metric or a compact subset of the real line with the standard  metric.

\vskip5pt

  We will need the following definition from \cite{bilip} and references therein.    Let $X$ be a subset of a Banach space $\mathbb B$ and let 
\[
U (X) = \left\{\frac{x - y}{\Vert x - y\Vert_{\mathbb B}} \biggm\vert x, \, y \in X  {\text{ with }} x \ne y\right\}.
\]

\noindent  We say that $X$ is {\bf{spherically compact}} if $U (X)$ has a compact closure in the norm topology of $\mathbb B$. For compact subsets of a Banach space, spherical compactness is an invariant under $C^1$-diffeomorphisms but not under bi-Lipschitz equivalence.   We say that $X$ is {\bf{weakly spherically compact}} provided that the weak closure of $U (X)$ does not contain $0 \in \mathbb B$.  For compact subsets, weak spherical compactness is a bi-Lipschitz invariant.  Obviously, spherically compact implies weakly spherically compact, but not conversely.

\begin{lemma}  \cite[Lemma 2.4]{bilip}
\label{wscfin} 
Let $X$ be a compact subset of a normed linear space $\mathbb B$.  Then there exist  $N \in \N$ and a bounded linear map $\F: \mathbb B \aro \R^N$ with $\F\big\vert_X$ a bi-Lipschitz embedding if and only if $X$ is weakly spherically compact.
\end{lemma}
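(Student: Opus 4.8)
The plan is to prove the two implications separately. In each direction I would exploit the homogeneity of a linear map to reduce the bi-Lipschitz condition to a statement about the unit difference set $U (X)$, and then pass between ``weakly spherically compact'' and ``separated from $0$ by finitely many functionals'' via the definition of the weak topology.

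First I would treat the easy implication ($\Rightarrow$). Assuming $\F: \mathbb B \aro \R^N$ is bounded linear with $\F\big\vert_X$ bi-Lipschitz with lower Lipschitz constant $\ell > 0$, linearity gives, for $x \ne y$ in $X$ and $u = (x - y)/\Vert x - y\Vert_{\mathbb B} \in U (X)$, that $\Vert \F (u)\Vert = \Vert \F (x) - \F (y)\Vert/\Vert x - y\Vert_{\mathbb B} \ge \ell$; hence $\Vert \F (\cdot)\Vert \ge \ell$ on all of $U (X)$. Since $\F$ has finite rank, each of its coordinate functionals lies in $\mathbb B^{\ast}$ and is weakly continuous, so $v \mapsto \Vert \F (v)\Vert$ is weakly continuous and the set $\{v \in \mathbb B : \Vert \F (v)\Vert \ge \ell\}$ is weakly closed. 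This set contains $U (X)$, hence its weak closure, yet excludes $0$ because $\Vert \F (0)\Vert = 0 < \ell$. I conclude that $0$ is not in the weak closure of $U (X)$, i.e. $X$ is weakly spherically compact.

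For the converse ($\Leftarrow$), I would unwind the definition of the weak topology. If $0$ is not in the weak closure of $U (X)$, then some basic weak neighborhood of $0$ misses $U (X)$: there are $\psi_1, \dots, \psi_m \in \mathbb B^{\ast}$ and $\ep > 0$ with $\{v : |\psi_i (v)| < \ep, \, 1 \le i \le m\} \cap U (X) = \emptyset$, equivalently $\max_i |\psi_i (u)| \ge \ep$ for every $u \in U (X)$. I would then set $N = m$ and define the bounded linear map $\F = (\psi_1, \dots, \psi_m): \mathbb B \aro \R^N$. Boundedness yields the upper Lipschitz bound for $\F\big\vert_X$ for free, while homogeneity gives the lower bound: for $x \ne y$ in $X$, $\Vert \F (x) - \F (y)\Vert_2 = \Vert x - y\Vert_{\mathbb B} \, \Vert \F (u)\Vert_2 \ge \Vert x - y\Vert_{\mathbb B} \max_i |\psi_i (u)| \ge \ep \, \Vert x - y\Vert_{\mathbb B}$. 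Thus $\F\big\vert_X$ is bi-Lipschitz.

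Each direction is short, and the only real content is the dictionary between the geometric condition on $U (X)$ and the data of finitely many functionals. The step I would state most carefully is the weak continuity of $v \mapsto \Vert \F (v)\Vert$ in the forward direction, since this is what makes the relevant sublevel complement weakly closed and forces the weak closure of $U (X)$ away from $0$; it is also exactly where finite-dimensionality of the target is used. I would remark that compactness of $X$ is never invoked---only the structure of $U (X)$ inside $\mathbb B$ matters---so the hypothesis reflects the setting of the paper rather than a genuine need of the argument.
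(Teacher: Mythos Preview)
Your argument is correct in both directions, and your remark that compactness of $X$ plays no role is apt. Note, however, that the present paper does not supply its own proof of this lemma: it is quoted as \cite[Lemma 2.4]{bilip} and used as a black box, so there is nothing here to compare your approach against. That said, your proof is exactly the natural one --- translating ``$0$ lies outside the weak closure of $U(X)$'' into ``finitely many functionals separate $U(X)$ from $0$'' via a basic weak neighborhood, and then packaging those functionals into a finite-rank map --- and is essentially how such a statement is proved in the literature.
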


The next  result we need is the correct   version  of  \cite[Theorem 4.2]{bilip}.  The hypothesis  in  \cite[Theorem 4.2, (1)]{bilip} that $X$ is  weakly  spherically  compact   is not sufficient.  Instead, it must be assumed that $X$ is spherically compact.

\begin{thm}  \cite[Theorem 4.2]{bilip}
\label{hilwsc} 
Let $X$ be a compact subset of a Hilbert space $\h$ with the inherited metric $\Vert\cdot - \cdot\Vert_{\h}$ and let $\mu$ be a Borel regular measure with closed support $X$.
\begin{enumerate}
\item  If $X$ is spherically compact, then the canonical map $\iota_{\Vert\cdot - \cdot\Vert_{\h}}: X \aro L^2 (\mu)$ is bi-Lipschitz.
\item   Assume that all atoms of $\mu$ are isolated.   If the canonical map $\iota_{\Vert\cdot - \cdot\Vert_{\h}}:X \aro  L^2 (\mu)$ is bi-Lipschitz,  then $X$ is weakly spherically compact.
\end{enumerate}
\end{thm}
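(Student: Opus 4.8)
The plan is to prove the two implications separately, with a single engine driving both: an integral representation of the increment of $\iota_{\Vert\cdot-\cdot\Vert_{\h}}$ along a straight segment, combined with a compactness--contradiction argument that isolates the whole difficulty at the diagonal. Throughout write $d=\Vert\cdot-\cdot\Vert_{\h}$, so that $\iota_d(x)=\Vert x-\cdot\Vert_{\h}$. Since $\bigl|\,\Vert x-z\Vert_{\h}-\Vert y-z\Vert_{\h}\,\bigr|\le\Vert x-y\Vert_{\h}$, the map is automatically Lipschitz with $\Lip(\iota_d)\le\mu(X)^{1/2}$, so only the lower bound is in question. For $x\ne y$ set $u=(x-y)/\Vert x-y\Vert_{\h}\in U(X)$. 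Using that $\varpi$ is the gradient of the norm, so that $\tfrac{d}{dt}\Vert w+th\Vert_{\h}=\langle\varpi(w+th),h\rangle$, I would record the key identity
\[
\Phi_{x,y}(z)=\frac{\Vert x-z\Vert_{\h}-\Vert y-z\Vert_{\h}}{\Vert x-y\Vert_{\h}}=\int_0^1\bigl\langle\varpi\bigl(y-z+t(x-y)\bigr),u\bigr\rangle\,dt,
\]
valid for every $z$ (the integrand bounded by $1$), whence $Q(x,y):=\Vert\iota_d(x)-\iota_d(y)\Vert_2^2/\Vert x-y\Vert_{\h}^2=\int_X\Phi_{x,y}(z)^2\,d\mu(z)$. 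The virtue of this formula is that if a sequence of pairs collapses to a common point $a$ with directions $u_n$, then for each fixed $z\ne a$ the integrand converges to $\langle\varpi(a-z),u\rangle$.

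For part (1) I would argue by contradiction: if $\iota_d$ is not lower Lipschitz, pick $x_n\ne y_n$ with $Q_n=\int_X\Phi_{x_n,y_n}^2\,d\mu\to0$. By compactness of $X$ pass to $x_n\to a$, $y_n\to b$, and by \emph{spherical} compactness pass to $u_n\to u$ in norm with $\Vert u\Vert=1$; this is exactly where norm (rather than merely weak) compactness is used, since it secures a genuine unit limit direction. If $a\ne b$, continuity and injectivity of $\iota_d$ (from full support, continuity, and setting $z=a$) make the numerator tend to the positive number $\Vert\iota_d(a)-\iota_d(b)\Vert_2$ and the denominator to $\Vert a-b\Vert_{\h}>0$, so $Q_n\not\to0$; hence $a=b$. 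For $z\ne a$ the arguments $y_n-z+t(x_n-y_n)\to a-z\ne0$ uniformly in $t$, so $\Phi_{x_n,y_n}(z)\to\langle\varpi(a-z),u\rangle$, and dominated convergence on $X\setminus\{a\}$ forces $\int_{X\setminus\{a\}}\langle\varpi(z-a),u\rangle^2\,d\mu=0$. As $z\mapsto\langle z-a,u\rangle$ is continuous and $\mu$ has full support $X$, this gives $X\subseteq a+u^{\perp}$; but then $x_n,y_n\in a+u^{\perp}$ yield $\langle u_n,u\rangle=0$, so $\Vert u\Vert^2=\lim\langle u_n,u\rangle=0$, contradicting $\Vert u\Vert=1$.

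For part (2), assume the atoms of $\mu$ are isolated and $\iota_d$ is bi-Lipschitz with lower constant $\ell>0$, and suppose $X$ is \emph{not} weakly spherically compact, i.e. $0\in\overline{U(X)}^{\,w}$. Since $X$ is compact, $\overline{\mathrm{span}}(X-X)$ is separable and the weak topology is metrizable on its bounded sets, so I would extract $x_n\ne y_n$ with $u_n\to0$ weakly. Passing to $x_n\to a$, $y_n\to b$: if $a\ne b$ then $u_n\to(a-b)/\Vert a-b\Vert_{\h}\ne0$ in norm, contradicting $u_n\to0$ weakly, so $a=b$. Because $x_n\ne y_n\to a$, the point $a$ is non-isolated, hence not an atom, so $\mu(\{a\})=0$. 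Then for $z\ne a$ one has $\Phi_{x_n,y_n}(z)\to\langle\varpi(a-z),0\rangle=0$ (strong convergence of $\varpi(\cdots)$ paired against the weak null sequence $u_n$), and dominated convergence gives $Q_n\to0$; but $Q_n\ge\ell^2>0$, a contradiction. Hence $X$ is weakly spherically compact.

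The hard part in both directions is the behavior at the diagonal, namely controlling the integrand of $Q_n$ near the collapse point $a$. Away from $a$ the integrand is continuous and its limit is transparent, but at the single point $z=a$ the vector $\varpi(y_n-z+t(x_n-y_n))$ has no limit. In (1) I sidestep this by discarding $\{a\}$ and invoking full support; in (2) it is precisely the isolated-atoms hypothesis that forces $\mu(\{a\})=0$ and lets dominated convergence run. Carefully establishing this dichotomy, together with the passage between norm and weak limits of the direction vectors (and the metrizability reduction used to produce a weakly null sequence), is where the genuine work lies.
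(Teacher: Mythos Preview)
The paper does not prove Theorem~\ref{hilwsc}; it is quoted from \cite{bilip} (with the hypothesis in part~(1) corrected from ``weakly spherically compact'' to ``spherically compact'') and used as a black box in the proof of Theorem~\ref{iotarho}. There is therefore nothing in the present paper to compare your argument against.

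That said, your proposal is a sound and self-contained proof. The integral representation
\[
\Phi_{x,y}(z)=\int_0^1\bigl\langle\varpi\bigl(y-z+t(x-y)\bigr),\,u\bigr\rangle\,dt,
\qquad u=\frac{x-y}{\Vert x-y\Vert_{\h}},
\]
is the natural way to exploit the ambient linear structure of $\h$, and your diagonal dichotomy is handled correctly in both directions. In part~(1) you use norm compactness of $\overline{U(X)}$ exactly where it is needed, to secure a genuine unit limit $u$; the Fatou step then forces $X\subset a+u^{\perp}$, and the contradiction $\langle u_n,u\rangle=0$ versus $u_n\to u$ is clean. In part~(2) your reduction to a weakly null \emph{sequence} via separability of $\overline{\mathrm{span}}(X-X)$ and metrizability of the weak topology on bounded sets is legitimate, and the key observation that the collapse point $a$ is non-isolated (hence, by hypothesis, not an atom) is precisely what makes dominated convergence go through on $X\setminus\{a\}$.

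Two minor points worth tightening. First, your claim that the integral identity is ``valid for every $z$'' needs a word when $z$ lies on the segment $[x,y]$: the integrand is undefined at one value of $t$, but $t\mapsto\Vert y-z+t(x-y)\Vert_{\h}$ is still absolutely continuous, so the fundamental theorem of calculus survives. Second, in part~(1) you invoke dominated convergence to pass to the limit in $Q_n$, but since $Q_n\to 0$ and the integrands are nonnegative, Fatou on $X\setminus\{a\}$ already suffices and avoids any fuss about the value at $z=a$.
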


\noindent Part (2) of this theorem is needed  in  the proof  of Theorem  \ref{iotarho}.   In \cite{LangPlaut} it is asked whether every doubling subset of a Hilbert space admits a bi-Lipschitz embedding into some Euclidean space.   This problem is still open, but for doubling subsets of $L^p$, $p > 2$, the answer is known to be negative \cite{naor}.  Part (2) of Theorem \ref{hilwsc} above shows that, for compact subsets, if we replace the doubling condition with the condition that the subset    supports a   Borel regular measure  with isolated atoms and bi-Lipschitz canonical map, then  it does admit a bi-Lipschitz embedding into some  Euclidean space.  See also Proposition \ref{inhil} for another such result.

\begin{proof}[Proof of Theorem  \ref{iotarho}]
We define a Borel regular probability measure $\nu$ on $\iota_d (X)$ by setting $\nu = (\iota_d)_{\ast} \mu$.  That is,  $\nu (A) = \mu \bigl(\iota_d^{- 1} (A)\bigr)$ for every Borel subset $A \subset \iota_d (X)$.  Then, letting  $\zeta =  \iota_d^{\ast} \circ  \iota_{\Vert\cdot - \cdot\Vert_2}\big\vert_{\iota_d (X)}$,  the diagram

\[
\xymatrix{
 & &\iota_d (X) \subset L^2 (\mu) \ar[d]^{\zeta} \\
X \ar[urr]^{\iota_d}     \ar[rr]^{\iota_{\rho_d}}& &  \iota_{\rho_d} (X) \subset L^2 (\mu)
}
\]

\noindent is commutative by definition.   If $\iota_{\rho_d}$ is lower Lipschitz, then so are the maps  $\iota_d$ and $\zeta$.  Moreover, since all atoms of $\mu$ are isolated, all atoms of $\nu$ are also isolated.  Hence, by  part $(2)$ of Theorem  \ref{hilwsc}, the set $\iota_d (X) \subset L^2 (\mu)$  is weakly spherically compact.  Consequently, by Lemma  \ref{wscfin},  there is a bi-Lipschitz embedding   $\xymatrix{f: \bigl(\iota_d (X), \Vert \cdot - \cdot\Vert_2\bigr) \,  \ar[r] &\R^N}$  for some $N \in \N$.    Then the composite $\xymatrix{f\circ \iota_d: (X, d)     \ar[r] &\R^N}$  is a bi-Lipschitz embedding.
\end{proof}


\section{Another bi-Lipschitz embedding result}

The next result,  which  depends on Theorem \ref{iotarho},  states that the same conclusion holds provided that $\iota_d$ is not ``too far"   from an isometry.

\begin{thm}
\label{iotaneariso} 
Let  $(X, d, \mu)$ be a compact metric-measure space and assume that the  atoms of $\mu$ are isolated.   Assume further that the  canonical map $\iota_{d}: (X, d) \aro L^2 (\mu)$ is lower Lipschitz.     If the lower Lipschitz constant of $\iota_d$  is sufficiently  close -- depending on on the metric space $(X, d)$ -- to $1$, then   there exists a bi-Lipschitz embedding  of $(X, d)$ into some $\R^N$.
\end{thm}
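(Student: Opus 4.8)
The plan is to reduce Theorem \ref{iotaneariso} to Theorem \ref{iotarho} by showing that if the lower Lipschitz constant $\ell$ of $\iota_d$ is close enough to $1$, then the canonical map $\iota_{\rho_d}: (X, d) \aro L^2 (\mu)$ is automatically lower Lipschitz, where $\rho_d = \iota_d^{\ast}\Vert\cdot - \cdot\Vert_2$. Recall that $\rho_d$ is already bi-Lipschitz equivalent to $d$ under our hypothesis: we always have $\rho_d \le d$, and lower Lipschitzness of $\iota_d$ gives $\ell\, d \le \rho_d$. So it suffices to produce a lower bound of the form $\Vert\iota_{\rho_d}(x) - \iota_{\rho_d}(y)\Vert_2 \ge c\,\rho_d(x,y)$ for some $c > 0$. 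The natural strategy is to compare $\iota_{\rho_d}$ with $\iota_d$ directly: estimate $\bigl\Vert \iota_{\rho_d}(x) - \iota_{\rho_d}(y) - \bigl(\iota_d(x) - \iota_d(y)\bigr)\bigr\Vert_2$ and show it is small relative to $\rho_d(x,y)$ when $\ell$ is near $1$, so that $\iota_{\rho_d}$ inherits a lower Lipschitz bound by a perturbation argument (reverse triangle inequality).

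First I would write out, for fixed $x, y$, the pointwise difference of the two integrands: for $\mu$-a.e. $z$,
\[
\bigl(\rho_d(x,z) - \rho_d(y,z)\bigr) - \bigl(d(x,z) - d(y,z)\bigr) = \bigl(\rho_d(x,z) - d(x,z)\bigr) - \bigl(\rho_d(y,z) - d(y,z)\bigr).
\]
Since $0 \le d(x,z) - \rho_d(x,z) \le (1 - \ell)\,d(x,z) \le (1-\ell)\,\diam(X,d)$, each bracket is controlled by $(1-\ell)\,\diam(X,d)$; but a cruder bound like this will only give a factor of $2(1-\ell)\diam(X,d)\,\mu^{1/2}(X)$ on the $L^2$ norm of the difference, independent of $\rho_d(x,y)$, which is not good enough near a pair of close points. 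So the key step is a more refined estimate showing that the function $z \mapsto \rho_d(x,z) - d(x,z)$ has small oscillation as $x$ varies over a $\rho_d$-ball; equivalently, that $g(x) := \rho_d(x,z) - d(x,z)$ (for fixed $z$) is Lipschitz in $x$ with small constant. Since $x \mapsto \rho_d(x,z)$ has Lipschitz constant at most $1$ in the $\rho_d$ metric (it is $1$-Lipschitz, being a distance function for $\rho_d$), and $x \mapsto d(x,z)$ is $1$-Lipschitz in $d$ hence $(1/\ell)$-Lipschitz in $\rho_d$, this only yields a Lipschitz bound of roughly $1/\ell - 1 + $ something, again $O(1-\ell)$ — but now multiplied by $\rho_d(x,y)$ after integrating, which \emph{is} what we want. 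Assembling: $\Vert\iota_{\rho_d}(x)-\iota_{\rho_d}(y) - (\iota_d(x)-\iota_d(y))\Vert_2 \le C(1-\ell)\,\rho_d(x,y)$ for an explicit $C$ depending only on $\diam(X,d)$ and $\mu(X)$ (and a lower bound for $\ell$, say $\ell \ge 1/2$).

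Given that estimate, the conclusion follows: $\iota_d$ is an isometry of $(X, \rho_d)$ into $L^2(\mu)$ by definition of $\rho_d$, so $\Vert\iota_d(x) - \iota_d(y)\Vert_2 = \rho_d(x,y)$, and hence
\[
\Vert\iota_{\rho_d}(x) - \iota_{\rho_d}(y)\Vert_2 \ge \rho_d(x,y) - C(1-\ell)\,\rho_d(x,y) = \bigl(1 - C(1-\ell)\bigr)\rho_d(x,y).
\]
Choosing $\ell$ close enough to $1$ that $C(1-\ell) < 1$ makes $\iota_{\rho_d}$ lower Lipschitz on $(X, \rho_d)$, hence also on $(X,d)$ since $\rho_d \ge \ell\, d$. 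The atoms of $\mu$ are isolated by hypothesis, so Theorem \ref{iotarho} applies and yields a bi-Lipschitz embedding of $(X,d)$ into some $\R^N$. The threshold $\ell_0 = 1 - 1/C$ depends only on $\diam(X,d)$ and $\mu(X)$, matching the claim that it depends on the space $(X,d)$ (and on $\mu$).

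I expect the main obstacle to be the refined oscillation estimate in the second paragraph: making the bound on $\Vert\iota_{\rho_d}(x)-\iota_{\rho_d}(y) - (\iota_d(x)-\iota_d(y))\Vert_2$ genuinely proportional to $\rho_d(x,y)$ — rather than merely bounded by a constant times $(1-\ell)$ — requires carefully exploiting that \emph{both} distance functions $\rho_d(\cdot, z)$ and $d(\cdot, z)$ are $1$-Lipschitz in their own metrics and these metrics are $(1\pm O(1-\ell))$-equivalent, so their difference is Lipschitz with constant $O(1-\ell)$. One has to be a little careful that the difference $\rho_d(x,z) - d(x,z)$, while small in sup norm, need not itself be monotone or have an obvious sign pattern as $x$ moves; the clean way is probably to bound $\bigl\vert\bigl(\rho_d(x,z)-d(x,z)\bigr) - \bigl(\rho_d(y,z)-d(y,z)\bigr)\bigr\vert$ by $\bigl\vert\rho_d(x,z)-\rho_d(y,z)-(d(x,z)-d(y,z))\bigr\vert \le \bigl\vert\rho_d(x,z)-\rho_d(y,z)\bigr\vert + \bigl\vert d(x,z)-d(y,z)\bigr\vert$ and note $\bigl\vert d(x,z)-d(y,z)\bigr\vert \le d(x,y) \le \ell^{-1}\rho_d(x,y)$ while $\bigl\vert\rho_d(x,z)-\rho_d(y,z)\bigr\vert\le\rho_d(x,y)$, giving a crude $(1+\ell^{-1})\rho_d(x,y)$ — too big — so one instead needs the two terms to \emph{nearly cancel}, which is exactly the point requiring the near-isometry hypothesis and a second-order rather than first-order comparison. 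Getting the constant $C$ right, and verifying it depends only on the allowed data, is where the real work lies.
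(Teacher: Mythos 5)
Your high-level reduction coincides with the paper's: both arguments establish that $\iota_{\rho_d}$ is lower Lipschitz once the lower Lipschitz constant $\ell$ of $\iota_d$ is close enough to $1$, and then invoke Theorem \ref{iotarho}. The difference is in how the smallness of the perturbation $d-\rho_d$ is converted into lower Lipschitzness of $\iota_{\rho_d}$. The paper introduces the pseudometric $W_d$ on the gauge $\mathcal G (d)$, proves via the uniform-point-separation criterion (Theorem \ref{bL-sep}) that the set $\mathcal E (d)$ of metrics with bi-Lipschitz canonical map is $W_d$-open (Lemma \ref{strctsmll}), and then bounds $W_d (d, \rho_d)$ by $(1-\ell)/\ell$. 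You instead exploit that $\iota_d$ is an exact isometry of $(X,\rho_d)$ into $L^2(\mu)$ and run a reverse-triangle-inequality perturbation in $L^2$; that conversion step is clean and would work, and in fact you only need an $L^2$-in-$z$ version of the oscillation bound, which is formally weaker than the sup-in-$z$ bound $W_d(d,\rho_d)\le C(1-\ell)$ that the paper uses.

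The gap is the one you flag yourself, and it is genuine: the central estimate
\[
\bigl\Vert \iota_{\rho_d}(x)-\iota_{\rho_d}(y)-\bigl(\iota_d(x)-\iota_d(y)\bigr)\bigr\Vert_2 \le C\,(1-\ell)\,\rho_d(x,y)
\]
is never proved. The two bounds you actually have cannot be multiplied: pointwise in $z$ the oscillation $\bigl\vert (d-\rho_d)(x,z)-(d-\rho_d)(y,z)\bigr\vert$ is at most $(1-\ell)\,\diam(X,d)$ on one hand and at most $\bigl(1+\ell^{-1}\bigr)\rho_d(x,y)$ on the other, and the minimum of these is not $O\bigl((1-\ell)\rho_d(x,y)\bigr)$ for close pairs; your second paragraph's claim that the difference of a $1$-Lipschitz and a $\ell^{-1}$-Lipschitz function is Lipschitz with constant $O(1-\ell)$ is wrong (Lipschitz constants of differences add, as you concede in your last paragraph). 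Moreover, the needed bound cannot follow from the two-sided inequality $\ell\, d\le\rho_d\le d$ alone: on a three-point space $\{x,y,z\}$ with $d(x,y)=\varepsilon$ and $d(x,z)=d(y,z)=1$, the metric $\f$ with $\f(x,y)=\varepsilon$, $\f(x,z)=\ell$, $\f(y,z)=\ell+\varepsilon$ satisfies $\ell\,d\le\f\le d$, yet the oscillation of $d-\f$ over $\{x,y\}$ at $z$ equals $\varepsilon=d(x,y)$, which exceeds $\frac{1-\ell}{\ell}\,d(x,y)$ whenever $\ell>1/2$ --- precisely the regime of interest. So any correct proof must use the specific integral structure of $\rho_d(x,y)=\Vert\iota_d(x)-\iota_d(y)\Vert_2$, not merely its bi-Lipschitz equivalence with $d$. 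Be aware that the quantity you are missing is exactly the paper's $W_d(d,\rho_d)$ (up to replacing $\sup_z$ by an $L^2$ average), and the paper itself passes over this point with a one-line ``it follows that''; you have correctly located the hard step, but your proposal does not close it.
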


Let $(X, d, \mu)$ be a metric-measure space  and let $\ep > 0$. For $x, y \in X$ we set

\begin{equation}
\label{Exy}
E (x, y, \ep; d) = \left\{z \in X \bigm\vert \bigl\vert d (x, z) - d (y, z)\bigr\vert \ge \ep \, d (x, y)\right\}.
\end{equation}

\noindent As in \cite{bilip}, we say that a   regular Borel measure $\mu$ on $X$ {\bf{separates points uniformly}} with respect to the metric $d$ provided that there exist $\ep_d > 0$ and $c_d > 0$ such that for every $x, y \in  X$ we have $\mu \bigl(E (x, y, \ep_d; d)\bigr) \ge c_d$.  We emphasize that the constants $\ep_d$ and $c_d$  are independent of $x$ and $y$ and depend only on $d$.

\vskip5pt

 For $(X, d, \mu)$ compact, the uniform separation  condition is equivalent to requiring the canonical map $\iota_d: X \aro L^2 (\mu)$   to be bi-Lipschitz.  

\begin{thm}  \cite[Theorem 4.1]{bilip}
\label{bL-sep} 
Let $(X, d, \mu)$ be a compact metric-measure space.  Then the measure  $\mu$ separates points uniformly with respect to $d$  if and only if   the canonical map $\iota_d: (X, d) \aro L^2 (\mu)$  is bi-Lipschitz.
\end{thm}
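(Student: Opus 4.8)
The plan is to prove Theorem~\ref{bL-sep} by establishing the two implications separately, both via direct estimates on $\Vert \iota_d(x) - \iota_d(y)\Vert_2^2 = \int |d(x,z) - d(y,z)|^2\, d\mu(z)$. The upper bound $\Vert \iota_d(x) - \iota_d(y)\Vert_2 \le \mu^{1/2}(X)\, d(x,y)$ is free: by the triangle inequality $|d(x,z) - d(y,z)| \le d(x,y)$ pointwise, so $\iota_d$ is automatically Lipschitz with constant $\mu^{1/2}(X)$ regardless of any hypothesis. Thus everything reduces to controlling the lower bound, i.e.\ to showing $\iota_d$ is lower Lipschitz if and only if $\mu$ separates points uniformly.

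For the direction ``uniform separation $\Rightarrow$ lower Lipschitz'': suppose there exist $\ep_d, c_d > 0$ with $\mu\bigl(E(x,y,\ep_d;d)\bigr) \ge c_d$ for all $x \ne y$. Then, restricting the integral defining $\Vert \iota_d(x) - \iota_d(y)\Vert_2^2$ to the set $E(x,y,\ep_d;d)$ and using $|d(x,z) - d(y,z)| \ge \ep_d\, d(x,y)$ there, I get
\[
\Vert \iota_d(x) - \iota_d(y)\Vert_2^2 \ge \int_{E(x,y,\ep_d;d)} \ep_d^2\, d^2(x,y)\, d\mu(z) \ge \ep_d^2\, c_d\, d^2(x,y),
\]
so $\iota_d$ is lower Lipschitz with lower constant at least $\ep_d\, c_d^{1/2}$. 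This direction is routine.

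For the converse, ``lower Lipschitz $\Rightarrow$ uniform separation'': suppose $\ell\, d(x,y) \le \Vert \iota_d(x) - \iota_d(y)\Vert_2$ for all $x, y$. Fix $x \ne y$ and write $A = E(x,y,\ep;d)$ for a parameter $\ep > 0$ to be chosen. Split the integral: on $X \setminus A$ the integrand is at most $\ep^2 d^2(x,y)$, and on $A$ it is at most $d^2(x,y)$ (pointwise bound from the triangle inequality). Hence
\[
\ell^2\, d^2(x,y) \le \Vert \iota_d(x) - \iota_d(y)\Vert_2^2 \le \ep^2\, d^2(x,y)\, \mu(X \setminus A) + d^2(x,y)\, \mu(A).
\]
Dividing by $d^2(x,y)$ and using $\mu(X \setminus A) \le \mu(X)$ gives $\ell^2 \le \ep^2 \mu(X) + \mu(A)$, so $\mu(A) \ge \ell^2 - \ep^2 \mu(X)$. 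Choosing $\ep = \ep_d$ small enough that $\ell^2 - \ep_d^2 \mu(X) > 0$ — say $\ep_d = \ell/(2\mu^{1/2}(X))$, giving $\mu(A) \ge 3\ell^2/4 =: c_d$ — yields a uniform lower bound on $\mu\bigl(E(x,y,\ep_d;d)\bigr)$ independent of $x,y$, which is exactly uniform point separation.

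The argument is essentially elementary once one sees the right split of the defining integral, so I do not anticipate a serious obstacle; the only point requiring a little care is ensuring the constants $\ep_d$ and $c_d$ come out genuinely independent of the chosen pair $(x,y)$, which they do because the pointwise bounds $|d(x,z)-d(y,z)| \le d(x,y)$ and the total mass $\mu(X) < \infty$ are uniform. Compactness of $(X,d)$ itself is not strictly needed for this equivalence as stated — it is the finiteness $\mu(X) < \infty$ and the $L^2$ setting that do the work — though compactness is what guarantees $\mu(X) < \infty$ and $\diam(X) < \infty$ in the ambient setup and is the natural hypothesis inherited from the surrounding results.
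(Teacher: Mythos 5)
Your proof is correct: the upper Lipschitz bound is automatic from the triangle inequality, and your two-way splitting of the integral $\int \vert d(x,z)-d(y,z)\vert^2\,d\mu(z)$ over $E(x,y,\ep;d)$ and its complement gives both implications with explicit constants ($\ell(\iota_d)\ge \ep_d\, c_d^{1/2}$ one way, $\ep_d=\ell/(2\mu^{1/2}(X))$, $c_d=3\ell^2/4$ the other). The paper only quotes this theorem from \cite[Theorem 4.1]{bilip} without reproducing a proof, but your argument is the standard one that the cited result rests on, and your observation that only $0<\mu(X)<\infty$ (not compactness) is really used is accurate.
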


Let $\mathcal G (d)$ denote the class of all metrics on $X$ which are bi-Lipschitz equivalent to $d$.  Clearly, given $\sigma, \f \in \mathcal G (d)$,  there exist $0 < \ell (\sigma, \f) \le L (\sigma, \f) < \infty$ with      $\ell (\sigma, \f) \, \sigma \le \f \le L (\sigma, \f) \, \sigma$.           In the terminology of \cite{HeinonenSullivan}, $\mathcal G (d)$ is a metric gauge so that the bi-Lipschitz embedding problem is the question   whether this metric gauge contains a Euclidean metric.

\vskip5pt

We define a pseudometric $W_d$ on $\mathcal G (d)$ as follows:  Given $\sigma$ and $\f$ in $\mathcal G  (d)$, we let
\[
w (\sigma, \f) (x, y) =  \sup \left\{\vert (\sigma - \f) (x, z) - (\sigma - \f) (y, z)\vert \bigm\vert z \in X \right\},
\]

\noindent for $x, y \in X$, and then we set 

\begin{equation}
\label{metmet}
W_d (\sigma, \f) = \sup \left.\left\{\frac{w (\sigma, \f) (x, y)}{d (x, y)} \right\vert x \ne y \in X\right\}.
\end{equation}

\noindent That $W_d$ is a pseudometric follows from simple computation.  In detail,  to prove the triangle inequality, let $\sigma, \f, \kappa \in \mathcal G (d)$.  Then, for any $x, y \in X$, we have
\[
\aligned
w (\sigma, \f) (x, y) + w (\f, \kappa) (x, y) 
&\ge \sup \bigl\{\vert (\sigma - \f) (x, z) - (\sigma - \f) (y, z) \vert  \bigr.  \\
&{}\ \   \  \qquad + \bigl. \vert (\f - \kappa) (x, z) - (\f - \kappa) (y, z)\vert\bigm\vert z \in X \bigr\} \\
&\ge \sup \bigl\{\vert (\sigma - \kappa) (x, z) - (\sigma - \kappa) (y, z)\vert \bigm\vert z \in X \bigr\} \\
&= w (\sigma, \kappa) (x, y).
\endaligned
\]

\noindent  We note that if $\sigma \in \mathcal G (d)$, then for $0 \le s, t$ we have $W_d (s \sigma, t \sigma) \le \bigl\vert t - s\bigr\vert  L (d, \sigma)$.  Moreover, $\ell (d, \sigma) W_{\sigma}  \le W_d \le L (d, \sigma) W_{\sigma}$.

\vskip5pt

 Now let  $\mathcal E (d)$ denote the (possibly empty) subset of  $\mathcal G (d)$ consisting of the metrics whose corresponding canonical maps      $X \aro L^2 (\mu)$   are bi-Lipschitz.

\begin{lemma}
\label{strctsmll} 
Let $(X, d, \mu)$ be a compact  metric-measure space . Then  $\mathcal E (d)$ is open in $\bigl(\mathcal G (d),  W_d\bigr)$.
\end{lemma}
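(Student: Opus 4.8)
The plan is to show that if $\sigma\in\mathcal E(d)$ then every $\f\in\mathcal G(d)$ that is $W_d$-close to $\sigma$ also lies in $\mathcal E(d)$, by exhibiting explicit uniform-separation constants for $\f$ built from those for $\sigma$. By Theorem \ref{bL-sep}, membership in $\mathcal E(d)$ is equivalent to the measure $\mu$ separating points uniformly, so I will work entirely with the quantities $E(x,y,\ep;\cdot)$. Fix $\sigma\in\mathcal E(d)$ with separation constants $\ep_\sigma>0$ and $c_\sigma>0$, and recall from the remarks after \eqref{metmet} that $\mathcal G(d)$-closeness is comparable in $W_\sigma$ and $W_d$, so it suffices to control $W_\sigma(\sigma,\f)$.

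The key estimate is the following. Suppose $W_\sigma(\sigma,\f)=\delta$. Then for all $x,y,z\in X$,
\[
\bigl|(\sigma-\f)(x,z)-(\sigma-\f)(y,z)\bigr|\le w(\sigma,\f)(x,y)\le \delta\,\sigma(x,y),
\]
hence
\[
\bigl|\f(x,z)-\f(y,z)\bigr|\ge \bigl|\sigma(x,z)-\sigma(y,z)\bigr|-\delta\,\sigma(x,y).
\]
Therefore, if $z\in E(x,y,\ep_\sigma;\sigma)$, i.e. $|\sigma(x,z)-\sigma(y,z)|\ge \ep_\sigma\,\sigma(x,y)$, then
\[
\bigl|\f(x,z)-\f(y,z)\bigr|\ge (\ep_\sigma-\delta)\,\sigma(x,y).
\]
Now I would convert this from a bound in terms of $\sigma(x,y)$ to one in terms of $\f(x,y)$: since $\sigma,\f\in\mathcal G(d)$ there is a constant $L=L(\f,\sigma)$ with $\f(x,y)\le L\,\sigma(x,y)$, so $(\ep_\sigma-\delta)\sigma(x,y)\ge \frac{\ep_\sigma-\delta}{L}\,\f(x,y)$. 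Choosing $\delta<\ep_\sigma/2$ gives $z\in E\bigl(x,y,\tfrac{\ep_\sigma-\delta}{L};\f\bigr)$. This shows the inclusion
\[
E(x,y,\ep_\sigma;\sigma)\subseteq E\!\left(x,y,\tfrac{\ep_\sigma-\delta}{L};\f\right),
\]
and since $\mu\bigl(E(x,y,\ep_\sigma;\sigma)\bigr)\ge c_\sigma$ for all $x\ne y$, the same lower bound $c_\sigma$ works for $\f$ with the smaller separation parameter. Hence $\mu$ separates points uniformly with respect to $\f$, so $\f\in\mathcal E(d)$ by Theorem \ref{bL-sep}. Translating the condition $W_\sigma(\sigma,\f)<\ep_\sigma/2$ back via $W_\sigma\le \ell(d,\sigma)^{-1}W_d$ yields an explicit $W_d$-ball around $\sigma$ contained in $\mathcal E(d)$.

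The one point requiring care — and the main obstacle — is the dependence of the comparison constant $L=L(\f,\sigma)$ on $\f$: a priori $L$ could blow up as $\f$ ranges over a $W_d$-ball, which would prevent the separation parameter from staying bounded away from $0$. I would handle this by noting that $W_d$-closeness of $\f$ to $\sigma$ itself forces $\f$ to be uniformly comparable to $\sigma$: indeed, for $z=x$ the defining supremum gives $|\f(x,y)-\sigma(x,y)|=|(\sigma-\f)(x,x)-(\sigma-\f)(x,y)|$... more directly, taking $z=y$ in $w(\sigma,\f)(x,y)$ one gets $|(\sigma-\f)(x,y)|\le W_d(\sigma,\f)\,d(x,y)$, and combining with $d\le \ell(d,\sigma)^{-1}\sigma$ shows $|\f(x,y)-\sigma(x,y)|\le \ell(d,\sigma)^{-1}W_d(\sigma,\f)\,\sigma(x,y)$, so $\f\le \bigl(1+\ell(d,\sigma)^{-1}W_d(\sigma,\f)\bigr)\sigma$. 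Thus on, say, the ball $W_d(\sigma,\f)<\ell(d,\sigma)$ we may take $L=2$ uniformly, and the radius of the $\mathcal E(d)$-neighbourhood of $\sigma$ can be taken to be $\min\{\ell(d,\sigma),\ \ell(d,\sigma)\ep_\sigma/4\}$ or similar. With this uniform bound secured, the argument above goes through and $\mathcal E(d)$ is open in $(\mathcal G(d),W_d)$.
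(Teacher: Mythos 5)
Your proof is correct and follows essentially the same route as the paper's: reduce membership in $\mathcal E(d)$ to uniform point separation via Theorem \ref{bL-sep}, then use the triangle inequality together with $w(\sigma,\f)(x,y)\le W_d(\sigma,\f)\,d(x,y)$ and the comparability of $\sigma$, $\f$, and $d$ to obtain the inclusion $E(x,y,\ep_\sigma;\sigma)\subseteq E(x,y,\ep';\f)$ with the same mass lower bound $c_\sigma$. The ``main obstacle'' you flag is actually a non-issue --- the separation constants for $\f$ need only be independent of $x$ and $y$, not uniform over the $W_d$-ball, which is why the paper is content with the $\f$-dependent parameter $\ell(\f,\sigma)\,\ep(d,\sigma)/2$ --- but your derivation of a uniform comparison constant from $W_d$-closeness is correct and merely yields a slightly more quantitative radius.
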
 
 
\begin{proof} 
 If $\mathcal E (d)$ is empty, there is nothing to prove.  Otherwise,   let $\sigma \in \mathcal E (d)$.  Then  there exists $0 <\ell (d, \sigma)$ such that $\ell (d, \sigma) \, d \le \sigma$.   Moreover, by Theorem \ref{bL-sep}, there exist $\ep (d, \sigma) > 0$ and $c (d, \sigma) >  0$ such that $\mu \bigl(E ( x, y, \ep (d, \sigma); \sigma)\bigr) \ge c (d, \sigma)$ for all $x, y \in X$; see (\ref{Exy}).    Similarly, given  $\f \in \mathcal G (d)$, there exists $0 <\ell (\f, \sigma)$ such that $\ell (\f, \sigma) \, \f \le \sigma$ and we have 
\[
\aligned
\ep (d, \sigma) &\le \frac{\bigl\vert\sigma (x, z) - \sigma (y, z)\bigr\vert}{\sigma (x, y)} \\
& \le\frac{\bigl\vert\f (x, z) - \f (y, z)\bigr\vert}{\sigma (x, y)} + \frac{\bigl\vert\sigma (x, z) - \f (x, z) - \bigl(\sigma (y, z) - \f (y, z)\bigr)\bigr\vert}{\sigma (x, y)}\\
&\le \frac{\bigl\vert\f (x, z) - \f (y, z)\bigr\vert}{\ell (\f, \sigma) \, \f (x, y)}  + \frac{W_d (\sigma,  \f)}{\ell (d, \sigma)}.
\endaligned
\]

\noindent Hence, if $W_d (\sigma, \f) < \ell (d, \sigma) \, \ep (d, \sigma) /2$, then 
\[
0 <  \ell (\f, \sigma) \left(\frac{\ep (d, \sigma)}{2}\right) \le \frac{\bigl\vert\f (x, z) - \f (y, z)\bigr\vert}{ \f (x, y)},
\]

\noindent and $E ( x, y, \ep ( d, \sigma); \sigma) \subset E ( x, y, \ell (\f, \sigma) \, \ep (d, \sigma) / 2; \f)$.   Consequently, $\f \in \cate (d)$. 
\end{proof}

\begin{proof}[Proof  of Theorem \ref{iotaneariso}]   Since $\iota_d$ is bi-Lipschitz,    $d \in \cate (d)$.  Then by Lemma \ref{strctsmll} there is  $r_d > 0$ such that the  open  $W_d$-ball $B (d, r_d)$  of radius $r_d$ centered at $d$ is contained in $\cate (d)$.

\vskip5pt

Again, letting $\rho_d = \iota_d^{\ast} \Vert\cdot - \cdot\Vert_2$,  there is  
$0 < \ell \le 1$ with   $\ell\, d \le \rho_d \le d$ implying  that $d - \rho_d \le (1 - \ell) d$.   It follows that $(1 - \ell) / (1 + \ell) \le  W_d (d, \rho_d) \le (1 - \ell) / \ell$ so that if $(1 - \ell) / \ell < r_d$, equivalently $1 / (1 + r_d ) < \ell$, then $\rho_d \in \cate (d)$.  In this case     $\iota_{\rho_d}: (X, d) \aro L^2 (\mu)$ is lower Lipschitz and,   by Theorem \ref{iotarho},   there exists a bi-Lipschitz embedding  $\xymatrix{(X, d)  \ar[r] &\R^N}$  for some $N \in \N$.
\end{proof}


 \section{A third bi-Lipschitz embedding result}

Let     $(X, d, \mu)$  be a metric-measure space.  As in Section 2,  we use  the inner product in $L^2 (\mu)$  to define a  symmetric function $\Delta (d): X \times X \aro [0, \infty)$ by setting

\begin{equation}
\label{convol}
\Delta (d) (x, y)  = \langle d (x, -), \, d (-, y)\rangle = \langle \iota_d (x), \, \iota_d (y)\rangle.
\end{equation}

\noindent   Although $\Delta (d)$ is not a metric, it does   induce a canonical map $\iota_{\Delta (d)}: X \aro L^2 (\mu)$ by setting $\iota_{\Delta (d)} (x) = \Delta (d) (x, -)$.  Moreover, if $\diam\, (X, d) < \infty$, then the map $\iota_{\Delta (d)}$ is Lipschitz; see Lemma \ref{kstark} below.

\begin{thm}
\label{d*d} 
Let  $(X, d, \mu)$ be a compact metric-measure space.  If the canonical map $\iota_{\Delta (d)}: X \aro L^2 (\mu)$ is lower Lipschitz, then   there exists a bi-Lipschitz embedding  of $(X, d)$ into some $\R^N$.
\end{thm}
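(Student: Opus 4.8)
The plan is to factor the map $\iota_{\Delta(d)}$ through the compact operator $T_d$ and let the compactness of $T_d$ do the work. Recall from diagram (\ref{k*k}) that $\iota_{\Delta(d)} = T_d \circ \iota_d$, where $T_d \colon L^2(\mu) \to L^2(\mu)$ is the Hilbert--Schmidt (hence compact, self-adjoint) operator of (\ref{td}). Since $X$ is compact we have $\diam(X,d) < \infty$, so $\iota_{\Delta(d)}$ is Lipschitz (Lemma \ref{kstark}); combined with the hypothesis that $\iota_{\Delta(d)}$ is lower Lipschitz, this says that $\iota_{\Delta(d)}$ is a bi-Lipschitz homeomorphism of $(X,d)$ onto the compact subset $\iota_{\Delta(d)}(X) \subset L^2(\mu)$ equipped with $\Vert \cdot - \cdot\Vert_2$. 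Hence it suffices to find $N \in \N$ and a bounded linear map $\F \colon L^2(\mu) \to \R^N$ whose restriction to $\iota_{\Delta(d)}(X)$ is a bi-Lipschitz embedding, for then $\F \circ \iota_{\Delta(d)} \colon (X,d) \to \R^N$ is a bi-Lipschitz embedding. By Lemma \ref{wscfin} such a $\F$ exists provided $\iota_{\Delta(d)}(X)$ is weakly spherically compact, and I would in fact show that it is spherically compact.

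I would fix $\ell > 0$ with $\Vert\iota_{\Delta(d)}(x) - \iota_{\Delta(d)}(y)\Vert_2 \ge \ell\, d(x,y)$ for all $x,y \in X$, and put $c_0 = \Lip(\iota_d) > 0$ (recall $\iota_d$ is Lipschitz, in fact a contraction). Given $x \ne y$ in $X$, injectivity of $\iota_d$ gives $u = \iota_d(x) - \iota_d(y) \ne 0$, with $0 < \Vert u\Vert_2 \le c_0\, d(x,y)$; writing $\hat u = u / \Vert u\Vert_2 \in S(\mu)$ and using linearity of $T_d$,
\[
\frac{\iota_{\Delta(d)}(x) - \iota_{\Delta(d)}(y)}{\bigl\Vert\iota_{\Delta(d)}(x) - \iota_{\Delta(d)}(y)\bigr\Vert_2} = \frac{T_d(\hat u)}{\Vert T_d(\hat u)\Vert_2}, \qquad \Vert T_d(\hat u)\Vert_2 = \frac{\bigl\Vert\iota_{\Delta(d)}(x) - \iota_{\Delta(d)}(y)\bigr\Vert_2}{\Vert u\Vert_2} \ \ge\ \frac{\ell\, d(x,y)}{c_0\, d(x,y)} \ =\ \frac{\ell}{c_0}.
\]
Setting $\ell_0 = \ell / c_0 > 0$, this shows that the secant set satisfies $U\bigl(\iota_{\Delta(d)}(X)\bigr) \subseteq K$, where $K = \bigl\{\, T_d(v)/\Vert T_d(v)\Vert_2 : v \in S(\mu),\ \Vert T_d(v)\Vert_2 \ge \ell_0 \,\bigr\}$.

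It then remains to show that the norm closure of $K$ is compact and does not contain $0$. I would take an arbitrary sequence $T_d(v_n)/\Vert T_d(v_n)\Vert_2$ in $K$, so $\Vert v_n\Vert_2 = 1$ and $\Vert T_d(v_n)\Vert_2 \ge \ell_0$ for all $n$. The closed unit ball of the Hilbert space $L^2(\mu)$ is weakly sequentially compact, so after passing to a subsequence $v_n \rightharpoonup v$; since $T_d$ is compact, $T_d(v_n) \to T_d(v)$ in norm, whence $\Vert T_d(v)\Vert_2 = \lim_n \Vert T_d(v_n)\Vert_2 \ge \ell_0 > 0$ and $T_d(v_n)/\Vert T_d(v_n)\Vert_2 \to T_d(v)/\Vert T_d(v)\Vert_2$ in norm, a unit vector. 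Thus every sequence in $K$ has a norm-convergent subsequence with limit on the unit sphere, so $\overline K$ is norm-compact and $0 \notin \overline K$; a fortiori the closure of $U(\iota_{\Delta(d)}(X))$ is compact and omits $0$. Hence $\iota_{\Delta(d)}(X)$ is spherically compact, in particular weakly spherically compact, and Lemma \ref{wscfin} produces the required $\F$.

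The one genuinely non-routine point is the displayed identity in the second paragraph: the lower Lipschitz estimate for $\iota_{\Delta(d)}$, together with the Lipschitz continuity of $\iota_d$, forces every normalized secant vector of $\iota_{\Delta(d)}(X)$ to be of the form $T_d(\hat u)/\Vert T_d(\hat u)\Vert_2$ with $\hat u$ a unit vector and $\Vert T_d(\hat u)\Vert_2$ bounded uniformly away from $0$; after that, compactness of the Hilbert--Schmidt operator $T_d$ makes the spherical compactness of $\iota_{\Delta(d)}(X)$ fall out immediately. It is worth noting that, in contrast with Theorems \ref{iotarho} and \ref{iotaneariso}, no hypothesis on the atoms of $\mu$ (nor any separability of $L^2(\mu)$) is needed here, because the argument invokes only weak sequential compactness of balls in a Hilbert space and compactness of $T_d$, rather than part (2) of Theorem \ref{hilwsc}.
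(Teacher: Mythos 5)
Your proof is correct and follows essentially the same route as the paper: both factor $\iota_{\Delta(d)}=T_d\circ\iota_d$ through the compact operator $T_d$ and reduce to Lemma \ref{wscfin} via (weak) spherical compactness. The only difference is that the paper observes that $T_d\big\vert_{\iota_d(X)}$ is bi-Lipschitz and cites Lemma \ref{complinear} to get weak spherical compactness of $\iota_d(X)$, whereas you inline exactly the proof of that lemma's first assertion -- the secant-set computation plus weak compactness of the unit ball and compactness of $T_d$ -- to get spherical compactness of $\iota_{\Delta(d)}(X)$ directly and then embed that set instead of $\iota_d(X)$.
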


For the proof, we will need the following special case of \cite[Lemma 2.3]{bilip}.

\begin{lemma}  cf. \cite[Lemma 2.3]{bilip}
\label{complinear} 
Let $\F : \mathbb H_1 \aro \mathbb H_2$ be a compact linear map of Hilbert spaces and let  $X \subset \mathbb H_1$ be a compact subset.  If $\F\big\vert_X$  is bi-Lipschitz,  then  $\F (X)$  is spherically compact and  $X$   itself is weakly spherically compact.
\end{lemma}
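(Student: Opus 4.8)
The plan is to treat the two conclusions separately, after extracting from the bi-Lipschitz hypothesis the two-sided bound
$\ell\,\Vert x - y\Vert_{\h_1} \le \Vert \F x - \F y\Vert_{\h_2} \le L\,\Vert x - y\Vert_{\h_1}$ on $X$ (in particular $\F\big\vert_X$ is injective, so $\F x \ne \F y$ whenever $x \ne y$, and $\F (X)$ is compact). The first thing I would record is a reduction identity: for $x \ne y$, writing $u = (x - y)/\Vert x - y\Vert_{\h_1} \in U (X)$, linearity gives $\F (x - y) = \Vert x - y\Vert_{\h_1}\,\F u$, so the unit secant vector of $\F (X)$ satisfies $(\F x - \F y)/\Vert \F x - \F y\Vert_{\h_2} = \F u/\Vert \F u\Vert_{\h_2}$. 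Hence $U (\F (X))$ is exactly the image of $U (X)$ under $u \mapsto \F u/\Vert \F u\Vert_{\h_2}$, and the lower Lipschitz bound forces $\Vert \F u\Vert_{\h_2} \ge \ell > 0$ for every $u \in U (X)$.

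For spherical compactness of $\F (X)$, the key idea is to use compactness of the operator $\F$ rather than compactness of $U (X)$ (which is a set of unit vectors in the possibly infinite-dimensional $\h_1$ and need not be relatively compact). Since $U (X)$ is bounded and $\F$ is compact, the norm closure $\overline{\F (U (X))}$ is compact in $\h_2$; moreover it lies in the annulus $\{v : \ell \le \Vert v\Vert_{\h_2} \le L\}$ and so avoids $0$. The radial projection $v \mapsto v/\Vert v\Vert_{\h_2}$ is continuous on $\h_2 \setminus \{0\}$, hence carries this compact set to a compact set that contains $U (\F (X))$ by the reduction identity. As a subset of a compact (thus closed) set, the closure of $U (\F (X))$ is compact, which is precisely spherical compactness of $\F (X)$.

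For weak spherical compactness of $X$, I would argue by contradiction: suppose $0$ lies in the weak closure of $U (X)$ and choose a net $u_\alpha \in U (X)$ with $u_\alpha \rightharpoonup 0$. The crux is that a compact operator is completely continuous on bounded sets even for nets, so $\F u_\alpha \to 0$ in the norm of $\h_2$. Indeed, if not, some subnet stays a fixed distance from $0$; since $\{u_\alpha\}$ is bounded, compactness of $\F$ yields a further subnet converging in norm to some $w \in \h_2$, while weak-to-weak continuity of the bounded operator $\F$ forces the weak limit of $\F u_\alpha$ to be $0$, so $w = 0$, a contradiction. But the lower bound gives $\Vert \F u_\alpha\Vert_{\h_2} \ge \ell > 0$ for all $\alpha$, contradicting $\F u_\alpha \to 0$. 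Therefore $0$ is not in the weak closure of $U (X)$, and $X$ is weakly spherically compact.

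The main subtlety I anticipate is precisely this net argument in the second part: because the weak closure must in general be described via nets rather than sequences (the Hilbert spaces are not assumed separable), the delicate point is justifying that $\F$ sends the bounded weakly null net $u_\alpha$ to a norm null net. This is the familiar ``compact equals completely continuous'' principle, but its net formulation needs the subnet extraction above; once that is in hand, both conclusions drop out immediately from the single inequality $\Vert \F u\Vert_{\h_2} \ge \ell$.
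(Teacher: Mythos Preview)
Your argument is correct. The paper does not supply its own proof of this lemma, citing instead \cite[Lemma 2.3]{bilip}; your direct argument---factoring the unit secants of $\F(X)$ through $\F\big\vert_{U(X)}$ followed by radial projection, and then using complete continuity of $\F$ on bounded nets to exclude $0$ from the weak closure of $U(X)$---is the natural route and is carried out cleanly. The one place that deserves care is exactly the one you flag: the net (rather than sequence) version of ``compact implies completely continuous,'' and your subnet extraction handles it correctly.
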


Given   a metric-measure space $(X, d, \mu)$,     it follows from simple calculations that, for $x, y \in X$,  have 

\[\aligned
\bigl\vert\bigl(1 - d (x, y)\bigr) &\min \left\{\Vert\iota_{d} (x)\Vert^2_2,\, \Vert\iota_{d^{1/2}}(x)\Vert_2^2\right\}  \bigr\vert
 \le \Delta  (d) (x, y) \\
&\le \bigl(1 + d (x, y)\bigr) \max \left\{\Vert\iota_{d} (x)\Vert^2_2  ,\, \Vert\iota_{d^{1/2}}(x)\Vert_2^2\right\},
\endaligned\]

\noindent and similarly for $y$, and that

\begin {equation}
\label{iotadd}
\Vert\iota_{\Delta (d)} (x) - \iota_{\Delta (d)} (y)\Vert_2 \le \diam\,  (X, d) \,  \mu^{3/2} (X) \, d (x, y).
\end{equation} 

\begin{lemma}
\label{kstark}
Let $(X, d, \mu)$ be a  metric-measure space.   Then the canonical map $\iota_{\Delta (d)}: X \aro L^2 (\mu)$    is injective.  Furthermore, if $\diam\, (X, d) < \infty$, then $\iota_{\Delta(d)}$ is Lipschitz.
\end{lemma}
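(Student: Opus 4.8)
The plan is to treat the two assertions separately: injectivity via a continuity-plus-full-support argument, and the Lipschitz bound by reading it off the inequality (\ref{iotadd}) recorded just above the statement.

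The key preliminary observation I would make is that $\Delta (d) (a, b) = \langle \iota_d (a), \iota_d (b)\rangle$ is a continuous function of $(a, b) \in X \times X$: since $\rho_d \le d$, the canonical map $\iota_d$ is a $1$-Lipschitz, hence continuous, map into $L^2 (\mu)$, and the inner product is continuous on $L^2 (\mu) \times L^2 (\mu)$. Moreover, because $\mu$ is non-trivial on nonempty open sets, its support is all of $X$, so any two continuous real-valued functions on $X$ that agree $\mu$-almost everywhere agree everywhere.

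With this in hand, suppose $\iota_{\Delta (d)} (x) = \iota_{\Delta (d)} (x^{\prime})$, i.e. $\Delta (d) (x, y) = \Delta (d) (x^{\prime}, y)$ for $\mu$-a.e. $y$; by the preceding remark this then holds for every $y \in X$, that is, $\langle \iota_d (x) - \iota_d (x^{\prime}), \iota_d (y)\rangle = 0$ for all $y \in X$. Setting $y = x$ and $y = x^{\prime}$ and subtracting gives $\Vert \iota_d (x) - \iota_d (x^{\prime})\Vert_2^2 = 0$, so $\iota_d (x) = \iota_d (x^{\prime})$; then $d (x, z) = d (x^{\prime}, z)$ for $\mu$-a.e. $z$, hence for every $z$, and $z = x$ yields $0 = d (x, x) = d (x^{\prime}, x)$, so $x = x^{\prime}$. (Equivalently, once $\iota_d (x) = \iota_d (x^{\prime})$ one may simply quote injectivity of $\iota_d$, cf. Lemma \ref{radproj}.) For the second assertion there is essentially nothing to prove: the inequality (\ref{iotadd}) states precisely that $\Vert \iota_{\Delta (d)} (x) - \iota_{\Delta (d)} (y)\Vert_2 \le \diam (X, d)\, \mu^{3/2} (X)\, d (x, y)$, so if $\diam (X, d) < \infty$ then $\iota_{\Delta (d)}$ is Lipschitz with $\Lip (\iota_{\Delta (d)}) \le \diam (X, d)\, \mu^{3/2} (X)$. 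If one prefers not to treat (\ref{iotadd}) as given, it is obtained by writing $\Delta (d) (x, w) - \Delta (d) (y, w) = \int [d (x, z) - d (y, z)]\, d (z, w)\, d\mu (z)$, bounding the integrand pointwise in absolute value by $d (x, y)\, \diam (X, d)$, and then integrating $|\Delta (d) (x, w) - \Delta (d) (y, w)|^2$ over $w$; alternatively the bound drops out of the factorization $\iota_{\Delta (d)} = T_d \circ \iota_d$ from diagram (\ref{k*k}) together with boundedness of $T_d$.

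I do not expect a serious obstacle here. The only point requiring a little care is the repeated step of upgrading an equality valid $\mu$-almost everywhere to one valid everywhere — this is exactly where the hypothesis that $\mu$ charges every nonempty open set is used, through continuity of $\iota_d$ and hence of $\Delta (d)$ — and the rest is routine estimation.
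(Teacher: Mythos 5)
Your argument is correct and follows essentially the same route as the paper's own proof: upgrade the $\mu$-a.e.\ identity $\langle \iota_d(x)-\iota_d(x'),\iota_d(z)\rangle=0$ to all $z$ via continuity of $\iota_d$ and full support of $\mu$, evaluate at $z=x$ and $z=x'$ to conclude $\iota_d(x)=\iota_d(x')$, and then invoke injectivity of $\iota_d$; the Lipschitz claim is read off from inequality (\ref{iotadd}) exactly as in the paper. The extra remarks (the explicit re-derivation of injectivity of $\iota_d$ and the alternative derivation of (\ref{iotadd}) via $T_d\circ\iota_d$) are harmless elaborations of the same proof.
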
  

\begin{proof}
Assume that there exist $x, y \in X$ such that $\iota_{\Delta (d)} (x) = \iota_{\Delta (d)} (y)$.  Then for $\mu$-a.e. $z \in X$ we have $\langle\iota_{d} (x) - \iota_{d} (y),  \iota_{d} (z)\rangle = 0$.  But $\iota_{d}$ is Lipschitz and therefore continuous.  Hence, $\langle\iota_{d} (x) - \iota_{d} (y),  \iota_{d} (z)\rangle = 0$ for all $z \in X$.  In particular, setting $z = x$ and then $z = y$ gives $0 = \langle\iota_{d} (x) - \iota_{d} (y),  \iota_{d} (x) - \iota_{d} (y)\rangle = \bigl\Vert\iota_{d} (x) - \iota_{d} (y)\bigr\Vert_2$.  Since $\iota_{d}: X \aro L^2 (\mu)$ is injective,  we have $x = y$.

The second part follows directly from inequality  (\ref{iotadd}).
\end{proof}

\begin{proof}[Proof of Theorem \ref{d*d}]  Let $(X, d, \mu)$ be a compact metric-measure space.  Then in  diagram (\ref{k*k}),
  the map $\iota_{\Delta (d)}$ is  one-one and Lipschitz   by Lemma \ref{kstark}.   Moreover,  since  $\iota_d$ is Lipschitz and $T_d$ is compact,   if $\iota_{\Delta (d)}$ is lower Lipschitz, then the linear map $T_d\bigl\vert_{\iota_d (X)}$ is bi-Lipschitz.  Hence,  the set $\iota_d (X) \subset L^2 (\mu)$ is weakly spherically compact by Lemma \ref{complinear}.   The  result now follows immediately from  Lemma \ref{wscfin}.  
\end{proof}


\section{Remarks}

\noindent {\bf I. }  The following Example is folklore; it was shown to us by Steve Armentrout in the year 1990.

\begin{examples} 
\label{noiso}
Let $X = \{z, x_1, x_2, x_3\}$  and,   for $i = 1, 2, 3$, let $d (x_i, x_i) = 0$,  $d (x_i, x_j) = 2$ for $i \ne j$,  and   $d (z, x_i) = 1$.  It is easily checked that there is no isometric embedding of $(X, d)$ into $\R^N$, any $N \ge 1$.  
\end{examples}

\vskip6pt

\noindent {\bf II. } Let $(X, d)$ be a metric space and  let  $0 < t  <  s \le 1$.  If  the identitiy map $\id : (X, d^s) \aro (X, d^t)$ is Lipschitz, then the space $(X, d)$ is descrete.  However, if $\diam \, (X, d) < \infty$, then  the identitiy map $\id : (X, d^t) \aro (X, d^s)$ is Lipschitz with $\Lip (\id) \le \bigl(\diam (X, d)\bigr)^{s - t}$.

\vskip5pt

Now let $1 \le p < \infty$ and let $\iota (s)$ be the composite $\xymatrix{ (X, d^s) \ar[r]^{\id}  & (X, d) \ar[r]^{\iota_d} & L^p (\mu)}$ so that $\iota (s) (x) = d (x, -)$.  It is clear that   the map $\iota (s)$ is Lipschitz. However, if $\iota (s)$ is Lipshitz below, then the map $\id : (X, d) \aro (X, d^s)$ is Lipschitz.

\begin{cor}
\label{iotas}
Let $(X, d, \mu)$ be a metric-measure  space with $\diam (X, d) < \infty$,  let $0 < s < 1$, and let $1 \le p < \infty$.  If the map  $\iota (s) : (X, d^s) \aro L^p (\mu)$, given  by setting $\iota (s) (x) = d (x, -)$,  is bi-Lipschitz, then the space $(X, d)$ is descrete.  
\end{cor}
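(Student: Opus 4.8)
The plan is to chain together the two observations recorded in Remark II. First, if $\iota(s)$ is bi-Lipschitz then it is in particular lower Lipschitz, so there is $\ell > 0$ with $\ell\, d^s(x, y) \le \bigl\Vert d(x, -) - d(y, -)\bigr\Vert_p$ for all $x, y \in X$. On the other hand, the reverse triangle inequality gives $\bigl\vert d(x, z) - d(y, z)\bigr\vert \le d(x, y)$ for every $z \in X$, and therefore
\[
\bigl\Vert d(x, -) - d(y, -)\bigr\Vert_p = \left(\int \bigl\vert d(x, z) - d(y, z)\bigr\vert^p\, d\mu(z)\right)^{1/p} \le \mu(X)^{1/p}\, d(x, y),
\]
using the standing assumption $\mu(X) < \infty$. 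Combining the two displays yields $d^s(x, y) \le \bigl(\mu(X)^{1/p}/\ell\bigr)\, d(x, y)$ for all $x, y \in X$; that is, the identity map $\id : (X, d) \aro (X, d^s)$ is Lipschitz, which is precisely the implication asserted in the sentence immediately preceding the Corollary.

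Second, I would invoke the first assertion of Remark II with the exponents $1$ and $s$ playing the roles of the $s$ and $t$ there: since $0 < s < 1$, a Lipschitz identity map $\id : (X, d) \aro (X, d^s)$ forces $(X, d)$ to be discrete. Concretely, writing $C$ for a Lipschitz constant of this map, we have $d(x, y)^s \le C\, d(x, y)$, hence $d(x, y)^{1 - s} \ge 1/C$, hence $d(x, y) \ge C^{-1/(1 - s)}$ whenever $x \ne y$. Thus any two distinct points of $X$ are separated by the fixed positive distance $C^{-1/(1 - s)}$, so $(X, d)$ is uniformly discrete, and in particular discrete.

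There is essentially no obstacle to overcome here: the statement is a two-step deduction from material already in hand. The only points requiring any care are bookkeeping the snowflake exponents correctly when applying Remark II, and noticing that the hypothesis $\diam (X, d) < \infty$ is not actually used in this direction -- all the argument needs from the measure-theoretic side is the standing assumption $\mu(X) < \infty$, which is what makes the displayed $L^p$ estimate finite.
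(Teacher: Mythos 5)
Your proof is correct and follows exactly the route the paper intends: the Corollary is stated as an immediate consequence of the two observations in Remark II, and you have simply supplied the (easy) verifications of both. The side remark that $\diam(X,d)<\infty$ is not used in this direction is also accurate, since only the lower Lipschitz bound for $\iota(s)$ and the standing assumption $\mu(X)<\infty$ enter the argument.
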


\vskip6pt

\noindent {\bf III. }  Let $(X, d_1, \mu)$ be a metric-measure space and let $d_2$ be another metric on $X$ satisfying $\ell d_1 \le d_2 \le d_1$ for some $0 < \ell \le 1$.  Assume that the measure $\mu$ separates points uniformly with respect to $d_1$ and let $\delta = d_1 - d_2$.  If there exists $k \ge 0$ such that $\bigl\vert\delta (x, z) - \delta (y,  z)\bigr\vert  \le k \delta (x, y)$, for all $x, y, z \in X$, then $E (x, y, \ep, d_1)  \subset E (x, y, \ep /k, d_2)$ so that  $\mu$ separates points uniformly with respect to $d_2$ as well.

\vskip6pt

\noindent {\bf IV. } Let $(X, d, \mu)$ be a metric-measure space  and consider the following two concepts:

\begin{enumerate}
\item  Let $\underline{D}: X \aro [0, \infty) \cup \{\infty\}$ by setting 
\[
\underline{D} (x) = \liminf_{r \to 0} \frac{\log \mu \bigl(B (x, r)\bigr)}{\log r},
\]

\noindent where $B(x, r)$ denotes the closed ball of radius $r \ge 0$ centered at the point $x \in X$.  In \cite{Lie}, the number $\underline{D} (x)$ is called the {\bf{lower mass-scaling dimension}} of $(X, d, \mu)$ at the point $x \in X$.  By taking $\limsup$ instead of $\liminf$ we get $\overline{D} (x)$, the  {\bf upper mass-scaling dimension} at the point $x$.  It is a classical result \cite{bill}  that if there exist $0 \le \ep_1 \le \ep_2$ with  $\ep_1 \le \underline{D} (x) \le \overline{D} (x) \le \ep_2$, for all $x \in X$, then the Hausdorff dimension of $(X, d)$ is in the interval $[\ep_1, \ep_2]$.

\item  The following condition, which we call {\bf{Ka\l-doubling}} condition,  is from \cite{kala}:  There is a function $C (r)$ satisfying  $\lim_{r \to 0} r^p \, C (r) = 0$ and every  $B (x, r)$ can be covered by at most $C (r)$ balls of radius $r/2$ and centers in $B (x, r)$.   Letting $W^{1, p} (X, d, \mu)$ denote the Sobolev space defined in \cite{haj}\footnote{Instead of  $W^{1, p} (X, d, \mu)$, the notation   $M^{1, p} (\mu)$ is used in \cite{haj}.}, it is shown in \cite{kala} that if $(X, d)$ satisfies the Ka\l-doubling condition, then the embedding  $\Psi: W^{1, p} (X, d, \mu)  \aro L^p (\mu)$ is compact.
\end{enumerate}

Now let $X$ be a compact subset of a Hilbert space $\h$ with the inherited metric and let $\mu$ be a finite  Borel regular measure with closed  support  $X$.  We assume that $(X, d = \Vert\cdot - \cdot\Vert_{\h})$  is Ka\l-doubling   and that  $2 < \inf_{x \in X} \underline{D} (x) < \infty$.\footnote{By taking the Cartesian product with the $3$-sphere $S^3$, if necessary, the lower inequality can always be guaranteed.}   As in  \cite{Lie},  we may then use the standard $C^1$ structure on $\h$ to show that

\begin{equation}
\label{sob1}
\bigg\vert\Vert x - u\Vert_{\h} - \Vert y - u\Vert_{\h} - \bigl[\Vert x - v\Vert_{\h} - \Vert y - v\Vert_{\h}\bigr]\bigg\vert  \le L \, \Vert x - y\Vert_{\h} \Vert u - v\Vert_{\h},
\end{equation}

\noindent for some $0 \le L < \infty$.  It follows that the canonical map $\iota_d\big\vert_X: (X,  \Vert\cdot - \cdot\Vert_{\h}) \aro L^2 (\mu)$ may be lifted uniquely to a Lipschitz map\footnote{Again, the distinction between $\iota_d$ and $\widehat\iota$ is in the codomain.} $\widehat\iota: (X,  \Vert\cdot - \cdot\Vert_{\h}) \aro  W^{1, 2} (X, \Vert\cdot - \cdot\Vert_{\h}, \mu)$  so that we have the commutative diagram  
\[
\label{sobol}
\xymatrix{  & W^{1, 2} (X, \Vert\cdot - \cdot\Vert_{\h}, \mu) \ar[d]^{\Psi}\\
\h \supset X \ar[ur]^{\widehat\iota} \ar[r]^{\iota_d} & L^2 (\mu)
}
\]

\noindent with $X$ compact, $\widehat\iota$ Lipschitz, and $\Psi$ a compact  linear map.  Hence, if $\iota_d\big\vert_X$ is bi-Lipschitz, then by \cite[Theorem 2.5]{bilip},  $\iota_d (X)$, and hence $X$ itself,  admits a bi-Lipschitz embedding into some Euclidean space.

\begin{prop} 
\label{inhil}
Let $X$ be a compact subset of a Hilbert space $\h$ with the inherited metric and let $\mu$ be a finite  Borel regular measure with closed  support  $X$.  We assume that $(X, d = \Vert\cdot - \cdot\Vert_{\h})$  is Ka\l-doubling, and we let $\iota_d\big\vert_X: (X,  \Vert\cdot - \cdot\Vert_{\h}) \aro L^2 (\mu)$ be the canonical map.  If  $\iota_d\big\vert_X$ is bi-Lipschitz, then    $X$  admits a bi-Lipschitz embedding into some $\R^N$.
\end{prop}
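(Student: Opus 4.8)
The plan is to read this Proposition off from the discussion that immediately precedes its statement, supplying the two steps that discussion leaves tacit. The first is a reduction to the mass‑scaling hypothesis $2<\inf_{x\in X}\underline{D}(x)<\infty$ used there: if it fails I would replace $(X,d,\mu)\subset\h$ by the product $(X\times S^{3},\,d',\,\mu\otimes\sigma)\subset\h\times\R^{4}$, where $S^{3}$ carries its round metric and Riemannian volume $\sigma$ and $d'$, $\mu\otimes\sigma$ are the product metric and measure. I would check that this product is again a compact subset of a Hilbert space with a finite Borel regular measure of full support; that it is again Ka\l-doubling, because $S^{3}$ — a compact Riemannian manifold — is doubling, hence Ka\l-doubling for every $p$, and a product of a Ka\l-doubling space with a doubling space is Ka\l-doubling; that the sphere factor contributes $3$ to the lower mass‑scaling dimension at every point, so $2<\inf\underline{D}<\infty$ now holds; and that the canonical map $\iota_{d'}$ of the product is still bi-Lipschitz. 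Since any bi-Lipschitz embedding of $X\times S^{3}$ into some $\R^{N}$ restricts to one of $X\times\{\mathrm{pt}\}\cong(X,d)$, this reduction is harmless.

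Granting $2<\inf\underline{D}<\infty$, the discussion produces the commutative triangle
\[
\xymatrix{
 & W^{1,2}(X,d,\mu) \ar[d]^{\Psi} \\
\h \supset X \ar[ur]^{\widehat\iota} \ar[r]^{\iota_{d}} & L^{2}(\mu)
}
\]
in which $X$ is compact, $\widehat\iota$ is Lipschitz (this uses inequality~(\ref{sob1}) and the $C^{1}$ structure of $\h$), $\Psi$ is a compact linear map (by the Ka\l-doubling hypothesis and \cite{kala}), and $\Psi\circ\widehat\iota=\iota_{d}\big\vert_{X}$. The rest is elementary. By hypothesis $\ell\,d(x,y)\le\Vert\iota_{d}(x)-\iota_{d}(y)\Vert_{2}\le L\,d(x,y)$ for some $0<\ell\le L$ and all $x,y\in X$, and since $\iota_{d}(x)-\iota_{d}(y)=\Psi\bigl(\widehat\iota(x)-\widehat\iota(y)\bigr)$ with $\Psi$ bounded, the lower bound makes $\widehat\iota$ a bi-Lipschitz embedding onto the compact set $\widehat\iota(X)\subset W^{1,2}(X,d,\mu)$; consequently $\Psi\big\vert_{\widehat\iota(X)}$ is bi-Lipschitz too, with lower Lipschitz constant at least $\ell/\Lip(\widehat\iota)$. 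Because $\Psi$ is compact linear, \cite[Lemma~2.3]{bilip} — the general form of Lemma~\ref{complinear} — shows $\widehat\iota(X)$ is weakly spherically compact in $W^{1,2}(X,d,\mu)$, so Lemma~\ref{wscfin} supplies $N\in\N$ and a bounded linear map $\F: W^{1,2}(X,d,\mu)\aro\R^{N}$ with $\F\big\vert_{\widehat\iota(X)}$ a bi-Lipschitz embedding; then $\F\circ\widehat\iota: (X,d)\aro\R^{N}$ is the required embedding. (This last passage is exactly \cite[Theorem~2.5]{bilip}.)

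The substantive work has in fact already been carried out in the discussion preceding the statement — constructing the Lipschitz lift $\widehat\iota$ into the Sobolev space from the $C^{1}$ estimate~(\ref{sob1}), which is where the lower mass‑scaling bound is used, and deducing compactness of $\Psi$ from the Ka\l-doubling condition. So for the Proposition itself I expect the main obstacle to be the verification, inside the reduction, that the canonical map of $X\times S^{3}$ remains bi-Lipschitz: by Theorem~\ref{bL-sep} this is the assertion that $\mu\otimes\sigma$ separates points uniformly for $d'$, and the awkward configuration is a pair of nearby points of $X\times S^{3}$ whose $X$-distance is ``absorbed'' by a common $S^{3}$-coordinate. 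I would expect to dispose of it by a scale‑by‑scale estimate showing that the part of $\mu$'s uniform‑separation mass carried by points $z$ at distances comparable to $d(x,y)$ transfers to the product up to constants (integration over $S^{3}$ only changes constants), while mass concentrated in a very small neighbourhood of $\{x,y\}$ is negligible there; and if the separation mass were essentially all of the latter type for many pairs, then $\mu$ would be close to a finite sum of heavy atoms, making $X$ essentially finite and the Proposition trivial. A minor further point is that $W^{1,2}(X,d,\mu)$ is in general only a Banach space, so in the last step one must use the Banach‑space versions of the spherical‑compactness results rather than Lemma~\ref{complinear} itself.
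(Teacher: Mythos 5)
Your argument is essentially the paper's own proof: the Proposition is established there by exactly the commutative triangle you describe --- the Lipschitz lift $\widehat\iota$ into $W^{1,2}$ obtained from (\ref{sob1}), the compactness of $\Psi$ from the Ka\l-doubling hypothesis via \cite{kala}, and then \cite[Theorem 2.5]{bilip}, which is precisely your combination of Lemma~\ref{complinear} and Lemma~\ref{wscfin}. Your elaboration of the $S^3$-product reduction (including the bi-Lipschitzness of the product's canonical map) and your caveat that $W^{1,2}$ is in general only a Banach space address points the paper disposes of only in a footnote or leaves implicit, so they are refinements of, not departures from, the paper's route.
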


Unfortunately, we cannot prove (\ref{sob1}) for a general metric-measure space $(X, d,  \mu)$, but only 

\begin{equation}
\label{sob2}
\bigg\vert d ( x, u) - d (y,  u) - \bigl[d (x,  v)  - d (y,  v)\bigr]\bigg\vert  \le 2 \, d (x, y)^{1/p}  d (u,  v)^{1/q},
\end{equation}
  
\noindent for $1 < p, q < \infty$  and $1/p + 1/q = 1$.   From (\ref{sob2}) we obtain a Lipschitz lift $\hat\iota_d: (X, d^{1/p})  \aro W^{1, 2} (X, d^{1/2}, \mu)$ of $\iota_d: (X, d^{1/p}) \aro L^2 (\mu)$.  We now have the following result  which is akin to the bi-Lipachitz embedding theorem of Assouad  \cite{assouad2} for doubling metric spaces.

\begin{prop}
Let $(X, d, \mu)$ be a Ka\l-doubling compact metric-measure space.  If the map $\iota_d: (X, d^{1/p}) \aro L^2 (\mu)$ is bi-Lipschitz for some $1 < p < \infty$, then there exists a bi-Lipschitz embedding of $(X, d^{1/p})$ into some $\R^N$.
\end{prop}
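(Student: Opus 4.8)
The plan is to mimic, step by step, the proof of Proposition \ref{inhil}, replacing the Hilbert-space estimate (\ref{sob1}) by its snowflaked analogue (\ref{sob2}) and correspondingly passing from $(X,d)$ to the snowflake $(X, d^{1/p})$. The key point is that the argument in Remark IV is purely formal once one has (a) a Lipschitz lift of the canonical map through a Sobolev space, and (b) compactness of the Sobolev embedding $\Psi$; both are available here, the former from (\ref{sob2}) and the latter from the Ka\l-doubling hypothesis via the result of \cite{kala} quoted in Remark IV.

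First I would record the Lipschitz lift. Fix $1<p<\infty$ with conjugate exponent $q$. Inequality (\ref{sob2}) says precisely that for each fixed $x\in X$ the function $z\mapsto d(x,z)-d(y,z)$, viewed on $(X,d^{1/q})$, has Lipschitz constant at most $2\,d(x,y)^{1/p}$; equivalently, the assignment $x\mapsto d(x,-)$ into the space $\Lip_{1/q}(X)$ (hence, after the natural inclusion and using finiteness of $\mu$, into $W^{1,2}(X,d^{1/2},\mu)$) is Lipschitz as a map out of $(X,d^{1/p})$. This gives the commutative triangle
\[
\xymatrix{ & W^{1, 2} (X, d^{1/2}, \mu) \ar[d]^{\Psi}\\
(X, d^{1/p}) \ar[ur]^{\hat\iota_d} \ar[r]_{\iota_d} & L^2 (\mu),}
\]
with $\hat\iota_d$ Lipschitz and, since $(X,d)$ is compact and $(X,d^{1/p})$ is therefore also compact, with domain a compact metric space. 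By the Ka\l-doubling hypothesis and \cite{kala}, the linear map $\Psi$ is compact.

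Now I would invoke the hypothesis that $\iota_d:(X,d^{1/p})\aro L^2(\mu)$ is bi-Lipschitz. Composing, $\iota_d=\Psi\circ\hat\iota_d$ is bi-Lipschitz on the compact set $\hat\iota_d(X)\subset W^{1,2}$; since $\Psi$ is a compact linear map and $\hat\iota_d$ is bi-Lipschitz onto its image, the restriction $\Psi\big\vert_{\hat\iota_d(X)}$ is bi-Lipschitz. Then exactly as in Remark IV (equivalently, by \cite[Theorem 2.5]{bilip}, or by combining Lemma \ref{complinear} with Lemma \ref{wscfin}), $\iota_d(X)\subset L^2(\mu)$ — and hence, pulling back along the bi-Lipschitz homeomorphism $\iota_d$, the space $(X,d^{1/p})$ itself — is weakly spherically compact and therefore admits a bi-Lipschitz embedding into some $\R^N$.

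The one genuinely non-routine point is the derivation of (\ref{sob2}) and the verification that it does yield a \emph{Lipschitz} (not merely bounded) lift into $W^{1,2}(X,d^{1/2},\mu)$; the paper already asserts (\ref{sob2}) and the existence of $\hat\iota_d$ in the paragraph preceding the statement, so for the proof proper one simply cites that discussion. Given that, the remaining steps are formal applications of results established earlier in the paper. I therefore expect the main obstacle to be purely expository: making sure the ambient Sobolev space is the correct one ($W^{1,2}(X,d^{1/2},\mu)$, with the snowflaked base metric $d^{1/2}$ matching the exponent $1/q$ appearing on the right-hand side of (\ref{sob2}) when $p=q=2$, and more generally checking the compatibility of exponents so that \cite{kala} applies to the relevant snowflake), after which the bi-Lipschitz embedding conclusion drops out of \cite[Theorem 2.5]{bilip} just as in Proposition \ref{inhil}.
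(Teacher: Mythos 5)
Your proposal is correct and follows exactly the route the paper intends: the proposition is proved by the discussion immediately preceding it, namely the Lipschitz lift $\hat\iota_d$ obtained from (\ref{sob2}), the compactness of $\Psi$ from the Ka\l-doubling hypothesis via \cite{kala}, and the factorization argument of \cite[Theorem 2.5]{bilip} (equivalently Lemma \ref{complinear} plus Lemma \ref{wscfin}) applied as in Proposition \ref{inhil}. The exponent-bookkeeping caveat you raise about the base metric of the Sobolev space is a fair observation, but it is the same gloss the paper itself makes.
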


\vskip6pt

\noindent {\bf V. }  For the sake of concreteness, it follows from elementary computation that for $X = [0, 1]$ with $d$ the standard metric and $\mu$ the Lebegue measure, the symmetric function $\Delta (d)$ is given by 

\[
\Delta(d) (x, y) = \left\{\aligned
&y^2 ( x - \frac{y}{3})  + x^2 (\frac{x}{3} - y) + x y - \frac{x + y}{2} + \frac{1}{3}, \ {\text{ if }} \,  0 \le y \le x \le 1,\\
&x^2 (y - \frac{x}{3})  + y^2 (\frac{y}{3} - x)  + x y - \frac{x + y}{2} + \frac{1}{3}, \ {\text{ if }} \,  0 \le x \le y \le 1.
\endaligned\right.
\]

\vskip6pt

\noindent {\bf VI. } Classically  (see, for instance,  \cite{dugundji, gromov}),  the canoncal map $\iota_d$ is viewed as a map $X \aro C (X) \subset L^{\infty} (\mu)$.  In this case, $\iota_d$ is an isometric embedding, but nowhere $C^1$ even when $X$ is a compact subset of $\R^n$. For example, for the unit circle $S^1$ with arclength metric and Lebesgue measure, $\iota_d (S^1) \subset C (S^1)$ has a ``corner'' at every point \cite{Lie}.

\vskip6pt

\noindent  {\bf{VII.}}   Let $d$ be  a metric on a set $X$.    In the language of enriched categories, the map $x \mapsto d (x, -)$ is  just the Yoneda embeding.  In order to be more specific,  we briefly recall  the Lawvere category \cite{lawvere}.  See \cite{maclane} for basic concepts of  categories, and \cite{Max} for those of enriched categories.

 \vskip5pt
 
 Let ${\mathcal L}_0$ denote the category whose objects are $[0, \infty) \cup \{\infty\}$ and whose maps are the reverse order.  That is,  $a \aro b$ if and only if $a \ge b$.  The Lawvere category $\mathcal L = \left(\mathcal L_0, +, 0\right)$ is a symmetric monoidal closed category with $ + $ as the monoidal operation,  $ 0 $ as the unit object, and   the cotensor $\map (a, b) =\max \{b - a,  0\}$ as the internal hom objects.  The twist isomorphism 
 $\map \bigl(a, \map (b, c)\bigr)  \cong \map \bigl(b, \map (a, c)\bigr)$
 is  just the identity\footnote{Both sides are  equal to $\max \{c - b - a, 0\}$.}.   An $\mathcal L$-category (also called an $\mathcal L$-enriched category) is a set  $X$ equipped with a   metric $d$.   Given two $\catl$-categories $(X, d)$ and $(Y, \delta)$, an $\catl$-functor $f: (X, d) \aro (Y, \delta)$ is just a contraction:  $\delta \bigl(f (x), f (x^{\prime})\bigr)  \le d (x, x^{\prime})$.   Hence, $f$ is an $\catl$-isomorphism if and only if it is an isometry.  An $\catl$-natural transformation from the $\catl$-functor $f: (X, d) \aro (Y, \delta)$ to the $\catl$-functor $g: (X, d) \aro (Y, \delta)$  consists of a family of maps $\eta = \{\eta_x\}_{x \in X}$ with $\eta_x: 0 \aro \delta \bigl(f (x), g (x)\bigr)$.  This, of course, means that there is at most one $\catl$-natural transformation from $f$ to $g$, and that is precisely when $\delta \bigl(f (x), g (x)\bigr) = 0$ for all $x \in X$.  Following \cite[page 29]{Max}, we write $\bigl[(X, d), (Y, \delta)\bigr]$ for the $\catl$-category whose underlying  ordinary category has objects the $\catl$-functors  $(X, d) \aro (Y, \delta)$ and maps the $\catl$-natural transformations between them.

 \vskip5pt
 
 Now, $\catl$ equipped with the family $\bigl\{\map (a, b)\bigm\vert a, b\in \catl_0\bigr\}$ as the hom objects is itself an $\catl$-category; see \cite[page 15]{Max}.  Hence, an $\catl$-functor $F: (X, d) \aro \catl$ satisfies $d (x, x^{\prime}) \ge \map \bigl( F (x), F (x^{\prime})\bigr) = \max \bigl\{F (x^{\prime}) - F (x), 0\bigr\}$.  In particular,  for each $x \in X$,  the functor $d (x, -): (X, d) \aro \catl$ is an $\catl$-functor  because $d$  satisfies  $d (y, z) \ge  \max \bigl\{d (x, z) - d (x, y), 0\bigr\}  = \map \bigl(d (x, y), d (x, z)\bigr)$.  The Yoneda embedding is the fully faithful $\catl$-functor $\mathbb Y: (X, d) \aro \bigl[(X, d), \catl\bigr]$ sending  the object $x$ to the representable $\catl$-functor  $d (x, -)$. 

 \vskip6pt

\noindent{\bf VIII.}     Either one  of our hypotheses

\begin{enumerate}
\item that $\iota_{\rho_d} : (X, d)  \aro L^2 (\mu)$ is bi-Lipschitz in Theorem \ref{iotarho}, or 
\item that $\iota_{\Delta (d)} : (X, d)  \aro L^2 (\mu)$ is bi-Lipschitz in Theorem \ref{d*d},
\end{enumerate}

\noindent implies that $\iota_d : (X, d)  \aro L^2 (\mu)$ is bi-Lipschitz.  In fact, writing $\ell (f)$ for the lower Lipschitz constant of a map $f$, we have $
\ell \left(\iota_{\Delta (d)}\right)  \inf_x \bigl\Vert\iota_d (x)\bigr\Vert_2   \le \ell \left(\iota_d\right)$.

\vskip6pt

\noindent{\bf IX.} In our  bi-Lipschitz embedding theorems above, that $(X, d)$ is a doubling (equivalently, of finite Assouad dimension \cite{assouad1}) metric space is a conclusion rather than a hypothesis.    As pointed out by  Eriksson-Bique \cite{sylvester}, perhaps   Statement A.1  (see the appendix below)  is true with some implied doubling hypothesis.


\vskip15pt

\hskip80pt  {\bf Appendix:  Erratum for  \cite{bilip}.}

\vskip8pt

This    erratum for \cite{bilip}  was scheduled to appear in Portugaliae Mathematica in 2020, but   somehow (most likely due to the COVID-19 pandemic)  it fell through the cracks and didn't happen.  So, here it is.  

\vskip5pt

The main ``result'' of  \cite{bilip} is the following statement:

\vskip6pt

\noindent {\bf{Statement A.1.}} \cite[Main Theorem]{bilip}  {\it{Let $(X, d, \mu)$ be a compact metric-measure space.  If the canonical map $\iota_d: (X, d) \aro L^p (\mu)$ is bi-Lipschitz for some $1 < p < \infty$, then there exists a bi-Lipschitz embedding of $(X, d)$ into some $\R^N$.}}

\vskip6pt

\noindent The above  remains simply a  ``statement''  because  Sylvester Eriksson-Bique \cite{sylvester},  and independently Basok and Zolotov \cite{mish-vlad},  found a fatal error in the proof presented in \cite[Lemma 3.6]{bilip}.  Specifically, in the proof of \cite[Lemma 3.6]{bilip}, it is falsely stated that ``If the sequence $\{i (n) \}_{n \ge 1}$ is not co-final in $I$, then the directed set $I^{\prime} = \{j : j > i (n), n \ge 1\}$ is non-empty and co-final in $I$."  Worse yet, they also constructed counterexamples to show that, as written,   Statement A.1  is indeed false.    The following is another such example; it is due an anonymous  reader.

\vskip5pt

\noindent{\bf Example A.2.} [Anonymous Reader]  Let $\lambda$ denote the Lebesgue measure on the interval $[0, 1]$. The first step is to find a compact non-doubling subset $L \subset L^{\infty} (\lambda) \subset L^2 (\lambda)$ so that  identity map ${\text{id}} : \bigl(L, \Vert\cdot\Vert_{\infty}\bigr) \aro \bigl(L, \Vert\cdot\Vert_2\bigr)$ is bi-Lipschitz  and $\Vert f\Vert_{\infty} \le M$ for all $f \in L$ and some fixed $M \ge 2$.

\vskip5pt

To this end, let $n \in \N$ and let $F_n = \{ -1, 1\}^{2^n}$ with the Hamming distance $d_n$:  For $\alpha = (a_1, \cdots, a_{2^n})$ and $\beta = (b_1, \cdots, b_{2^n})$ in $F_n$, we set $d_n (\alpha, \beta)$ equal to the number of indices at which $\alpha$ and $\beta$ differ.  Let $K_n \subset F_n$ be a Hadamard code: $K_n$ is a $2^{n - 1}$-separated subset of cardinality $2^n$.  In other words, for $\alpha, \beta \in K_n$ with $\alpha \ne \beta$ we have $d_n (\alpha, \beta) \ge 2^{n - 1}$.  Let $\nu_n$ denote the counting measure on $K_n$ normalized so that $\nu_n (K_n) = 1/2^n$.  Then   $\sum_{n \ge 1} \nu_n (K_n) = 1$, and each $\alpha \in K_n$ has measure $\nu_n (\alpha) =  1/2^{2 n}$.

\vskip5pt

Now let $h : \R \aro \R$ be the standard hat function centered at $t = 1/2$ and supported on the interval $[0, 1]$.  Explicitly, $h (t) = \max \left\{0, 1/2 - \vert t - 1/2\vert\right\}$, $t \in \R$.  Clearly, $h$ is Lipschitz with $\Lip (h) = 1$.  For each $n \ge 1$ and each $\alpha =  (a_1, \cdots, a_{2^n}) \in K_n$ we define a function $f_{n, \alpha} : \R \aro \R$ by setting 

\[
f_{n, \alpha} (t) = \frac{1}{2^n} \sum_{i = 0}^{2^n - 1} a_{i + 1} \, h (2^n t - i).
\]

\noindent Then $f_{n, \alpha}$ is a Lipschitz function supported on the interval $[0, 1]$ with $\Lip (f_{n, \alpha}) = 1$.  Consequently, each $f_{n, \alpha} \in L^{\infty} (\lambda) \subset L^2 (\lambda)$ with $\Vert f_{n, \alpha}\Vert_{\infty} = 1/ 2^{n + 1}$ while $\Vert f_{n, \alpha}\Vert_2 = 1 / (\sqrt{3}\, 2^{n + 1})$. 

\vskip5pt

We set 

\[
L = \bigcup_{n \ge 1} \bigcup_{\alpha \in K_n} \{f_{n, \alpha}\} \bigcup \{\bold 0\},
\]

\noindent where $\bold 0$ denotes the identically zero function on $[0, 1]$.   Then  $L \subset L^{\infty} (\lambda) \subset L^2 (\lambda)$ is compact and countable.   Moreover,  for $m \le n$ and $\alpha \ne \beta$,  we have  both $\Vert f_{n, \alpha} - f_{m, \beta}\Vert_{\infty} \sim 1/ 2^n$ as well as $\Vert f_{n, \alpha} - f_{m, \beta}\Vert_2 \sim 1 / 2^n$ so that for any $c \ge 2 \sqrt 3$ we have 

\begin{equation}
\label{inftwo}
\Vert f_{n, \alpha} - f_{m, \beta}\Vert_2 \le \Vert f_{n, \alpha} - f_{m, \beta}\Vert_{\infty} \le c \, \Vert f_{n, \alpha} - f_{m, \beta}\Vert_2
\end{equation}

\noindent implying that  ${\text{id}} : \bigl(L, \Vert\cdot\Vert_{\infty}\bigr) \aro \bigl(L, \Vert\cdot\Vert_2\bigr)$ is bi-Lipschitz.  The metric space $\bigl(L, \Vert\cdot\Vert_{\infty}\bigr)$ is non-doubling because  the ball of radius $1 / 2^n$ and centered at $\bold  0$ contains $2^n$ points with pairwise distances $\sim 1 / 2^n$ (the functions corresponding to $K_n \simeq \{f_{n, \alpha} \bigm\vert \alpha \in K_n\}$).  Lastly, we endow $L$ with a Borel regular probability measure $\nu$ by setting $\nu (f_{n, \alpha}) = 1 / 2^{2 n}$ and $\nu (\bold 0) = 0$.

\vskip5pt

Finally,  let $M \ge 2$ be fixed and let $X =  [0, 1] \cup L$.  We define a metric $d$ on $X$ by setting

\[
\aligned
d (t, s) & = \big\vert t - s\big\vert \  {\text{ for }} t, s \in [0, 1],\\
d (f, g) &= \Vert f - g\Vert_{\infty} \   {\text{ for }} f, g \in L, \\
{\text{and }} d (t, f) & = 2 M + f (t) \  {\text{ for }} t \in [0, 1] {\text{ and }} f \in L.
\endaligned
\]

\noindent In order to show that $d$ satisfies the triangle inequality, the inequalities (\ref{inftwo}) are needed in the mixed case.  As for a measure $\mu$, we set $\mu = \nu / 2 + \lambda / 2$.  Then $(X, d, \mu)$ is a compact  metric-measure space with $\mu (X) = 1$ which does not admit a bi-Lipschitz embedding into $\R^N$ for any $N \in \N$ because it is non-doubling.  However, it follows from straightforward calculations  (considering different cases separately, and again using (\ref{inftwo}) for the mixed case) that the canonical map $\iota_d : (X, d) \aro L^2 (\mu)$ is bi-Lipschitz.


\end{document}